\renewcommand{\email}[2][]{%
  \ifx\emails\@empty\relax\else{\g@addto@macro\emails{,\space}}\fi%
  \@ifnotempty{#1}{\g@addto@macro\emails{\textrm{(#1)}\space}}%
  \g@addto@macro\emails{#2}%
}
\renewcommand\labelenumi{\textup{(\roman{enumi})}}
\renewcommand\theenumi\labelenumi
\renewcommand\labelenumii{(\alph{enumii})}
\renewcommand\theenumii\labelenumii
\renewcommand\theenumii\labelenumii
\theoremstyle{plain}% gives italic text
\newtheorem{theorem}{Theorem}[section]
\newtheorem{lemma}[theorem]{Lemma}
\newtheorem{corollary}[theorem]{Corollary}
\newtheorem{proposition}[theorem]{Proposition}
\theoremstyle{definition}% gives upshape text
\newtheorem{remark}[theorem]{Remark}
\newtheorem{example}[theorem]{Example}
\DeclareMathOperator \re {Re}
\DeclareMathOperator \dv {div}
\newcommand\mc[1] {\mathcal{#1}}
\newcommand\mbb[1] {\mathds{#1}}
\newcommand{\eps}{\epsilon}
\newcommand{\I}{\mathds{1}}
\newcommand\fa{\quad\text{for all \ }}
\newcommand{\firstpara}[1]{ \ \textup{#1\ \ }}
\newcommand{\para}[1]{\bigskip\textup{#1\ \ }}
\begin{document}

\title[Euler--Maruyama approximation for L\'evy-driven SDEs]{Strong convergence of the Euler--Maruyama approximation for a class of L\'evy-driven SDEs}

\author[F.~K\"{u}hn]{Franziska K\"{u}hn}
\address[F.~K\"{u}hn]{TU Dresden\\ Fachrichtung Mathematik\\ Institut f\"{u}r Mathematische Stochastik\\ 01062 Dresden, Germany. E-Mail: \textnormal{franziska.kuehn1@tu-dresden.de}.}

\author[R.\,L.~Schilling]{Ren\'e L.\ Schilling}
\address[R.\,L.~Schilling]{TU Dresden\\ Fachrichtung Mathematik\\ Institut f\"{u}r Mathematische Stochastik\\ 01062 Dresden, Germany. E-Mail: \textnormal{rene.schilling@tu-dresden.de}.}

%\date{}

\subjclass[2010]{Primary: 60H10. Secondary: 60G51, 60J35, 60J75.}
\keywords{Euler--Maruyama approximation, stochastic differential equation, strong convergence.}

\begin{abstract}
    Consider the following stochastic differential equation (SDE)
    \begin{equation*}
    	dX_t = b(t,X_{t-}) \, dt+ dL_t, \quad X_0 = x,
    \end{equation*}
    driven by a $d$-dimensional L\'evy process $(L_t)_{t \geq 0}$. We establish conditions on the L\'evy process and the drift coefficient $b$ such that the Euler--Maruyama approximation converges strongly to a solution of the SDE with an explicitly given rate. The convergence rate depends on the regularity of $b$ and the behaviour of the L\'evy measure at the origin. As a by-product of the proof, we obtain that the SDE has a pathwise unique solution. Our result covers many important examples of L\'evy processes, e.g.\ isotropic stable, relativistic stable, tempered stable and layered stable.
\end{abstract}

\maketitle

\section{Introduction}

For a given  L\'evy process $(L_t)_{t \geq 0}$ with values in $\mbb{R}^d$ and L\'evy triplet $(\ell,Q,\nu)$ we consider the stochastic differential equation (SDE)
\begin{equation}\label{intro-eq1}
	dX_t = b(t,X_{t-}) \, dt + dL_t, \quad X_0 = x\in\mbb{R}^d.
\end{equation}
If the drift coefficient $b$ is H\"{o}lder continuous in time and space, there is a quite general result on the existence of a pathwise unique solution, cf.\ Chen, Song \& Zhang \cite{chen}. It is, however, in general not possible to calculate the solution explicitly, and therefore it is important to have numerical schemes which allow us to approximate the solution. In this paper we derive conditions on the L\'evy process $(L_t)_{t \geq 0}$ and the drift coefficient $b$ such that the Euler--Maruyama approximation
\begin{equation*}
	X_t^{(n)} -x = \int_0^t b \big(s,X_{\eta_n(s)-}^{(n)} \big) + L_t,
    \quad t \in [0,T],\; n\in\mbb{N},
\end{equation*}
converges strongly to a solution of the given SDE with a certain rate; here $\eta_n(s) := T\frac{i}{n}$ for $s \in [T \frac{i}{n}, T \frac{i+1}{n})$. It turns out that the convergence (rate) depends on two factors: the regularity of $b$ and the behaviour of the L\'evy measure at the origin.

If $b=b(x)$ satisfies a one-sided Lipschitz condition, then a result by Higham \& Kloeden \cite{kloeden} shows that the Euler--Maruyama approximation converges strongly with convergence rate $1/2$. It is natural to ask whether the regularity assumption can be weakened to H\"{o}lder regularity. Pamen \& Taguchi \cite{pamen} study the convergence rate for SDEs with H\"{o}lder continuous coefficients driven by Brownian motion and by truncated $\alpha$-stable L\'evy processes with index $\alpha>1$. For isotropic $\alpha$-stable L\'evy processes, $\alpha>1$, first results were obtained by Qiao \cite{qiao} ($b$ is Lipschitz up to a $\log$-term) and by Hashimoto \cite{hashimoto} who proves the strong convergence under a Komatsu condition, but does not determine the convergence rate. More recently, Mikulevicius \& Xu \cite{mik16} have shown that
\begin{equation*}
	\mbb{E} \left( \sup_{0 \leq t \leq T} |X_t-X_t^{(n)}|^p \right)
    \leq C {n^{-p \beta/\alpha}}, \quad p \in (0,\alpha),
\end{equation*}
if $\alpha \in (1,2)$ and $b$ is $\beta$-H\"{o}lder continuous for some $\beta>1-\alpha/2$.  This estimate shows, in particular, that there are two factors which result in a slow convergence of the Euler--Maruyama approximation: weak regularity of $b$ and a strong singularity of the L\'evy measure $\nu(dy) = |y|^{-(d+\alpha)} \, dy$ at $y=0$. The fact that the behaviour of the L\'evy measure influences the convergence rate was already observed by Jacod \cite{jacod} who investigated the weak convergence of the Euler--Maruyama approximation for a class of L\'evy-driven SDEs.

Our main result, Theorem~\ref{main1}, shows the strong convergence of the Euler--Maruyama approximation for a large class of driving L\'evy processes covering many important and interesting examples, e.g.\ isotropic $\alpha$-stable, relativistic stable, tempered stable and layered stable L\'evy processes. The proof relies on the so-called It\^o-Tanaka trick which relates the time average $\int_0^t b(s,X_s) \, ds$ of the solution $(X_t)_{t \geq 0}$ to \eqref{intro-eq1} with the solution $u$ to the Kolmogorov equation
\begin{equation}\label{intro-eq7}
	\partial_t u(t,x) + A_x u(t,x) + b(t,x) \cdot \nabla_x u(t,x) = - b(t,x);
\end{equation}
here $A_x$ denotes the infinitesimal generator of the driving L\'evy process $(L_t)_{t \geq 0}$ acting with respect to the space variable $x$. The key step is to prove the existence of a solution to \eqref{intro-eq7} which is sufficiently regular and satisfies certain H\"{o}lder estimates. The required regularity of $u$ depends on the regularity of $b$ and the behaviour of the L\'evy measure at $0$.

The It\^o-Tanaka trick has been used by Pamen \& Taguchi \cite{pamen} to prove the strong convergence of the Euler--Maruyama approximation for the particular case that $(L_t)_{t \geq 0}$ is a Brownian motion or a truncated stable L\'evy process taking values in $\mbb{R}^d$, $d \geq 2$. For these two processes the existence of a sufficiently nice solution to the Kolmogorov equation \eqref{intro-eq7} was already known. Pamen \& Taguchi do not take advantage of the fact that the required regularity of the solution depends on the behaviour of the L\'evy measure at $0$, and therefore they end up with a convergence rate which is far from being optimal.

\medskip
Our paper is organized as follows. In Section~\ref{main} we state and discuss the main results; the required definitions will be explained in Section~\ref{pre}. Section~\ref{p} is devoted to the proofs of the main results, and in Section~\ref{ex} we illustrate our results with examples. Some auxiliary statements are proved in the appendix.

\section{Main results} \label{main}

Throughout $(L_t)_{t \geq 0}$ is a $d$-dimensional L\'evy process with characteristic function $\mbb{E}e^{i \xi \cdot L_t}$. It is well-known that $\mbb{E}e^{i \xi \cdot L_t} = e^{-t \psi(\xi)}$ with a characteristic exponent $\psi$ which is uniquely characterized via the L\'evy--Khintchine formula by the L\'evy triplet $(\ell,Q,\nu)$ consisting of $\ell \in \mbb{R}^d$, $Q \in \mbb{R}^{d \times d}$ positive semidefinite and a measure $\nu$ on $\mbb{R}^d \backslash \{0\}$ such that $\int \min\{1,|y|^2\} \, \nu(dy)<\infty$, see \eqref{pre-eq1} and \eqref{pre-eq3} p.~\pageref{pre-eq1}.

\begin{theorem} \label{main1}
    Let $(L_t)_{t \geq 0}$ be a $d$-dimensional L\'evy process with L\'evy triplet $(\ell,0,\nu)$ and characteristic exponent $\psi: \mbb{R}^d \to \mbb{C}$, and let $\gamma_0 \in [1,2]$, $\gamma_{\infty}>0$ be such that $\int_{|z| \leq 1} |z|^{\gamma_0} \, \nu(dz) < \infty$ and $\int_{|z| \geq 1} |z|^{\gamma_{\infty}} \, \nu(dz)<\infty$. Assume that $L_t$ admits a transition density $p_t \in C^2(\mbb{R}^d)$, i.\,e.\ $x \mapsto p_t(x)$ is twice continuously differentiable, for all $t>0$, such that
	 there exist constants $\alpha \in (1,2]$ and $c=c(T)>0$ such that
    \begin{equation}\label{ihke}
		\int_{\mbb{R}^d} |\partial_{x_i} p_t(x)| \, dx \leq c t^{-1/\alpha}
        \fa  i \in \{1,\dots,d\},\; t \in (0,T].
	\end{equation}
    Let $b: [0,\infty) \times \mbb{R}^d \to \mbb{R}^d$ be a bounded function which is $\beta$-H\"{o}lder continuous with respect to $x$ and $\eta$-H\"{o}lder continuous with respect to $t$ for some $\beta,\eta \in (0,1]$, i.e.\
    \begin{equation*}
		 |b(t,x)-b(t,y)| \leq C |x-y|^{\beta}
    \quad\text{and}\quad
        |b(s,x)-b(t,x)| \leq C |s-t|^{\eta}
	\end{equation*}
	holds for all $s,t \geq 0$, $x,y \in \mbb{R}^d$ and with an absolute constant $C>0$. If the balance condition
    \begin{gather}\label{bc}
		2 \alpha - \gamma_0 (1-\beta)>2,
	\intertext{is satisfied, then the SDE }\label{sde}
		dX_t = b(t,X_{t-}) \, dt + dL_t, \quad X_0 = x
    \intertext{has a pathwise unique strong solution $(X_t)_{t \geq 0}$, and for any $p \leq \gamma_{\infty}$ and $T>0$ there exists a constant $C>0$ such that}\label{ineq}
		\mbb{E} \left( \sup_{0 \leq t \leq T} |X_t-X_t^{(n)}|^p \right)
        \leq C {n^{-\min\{1,\,p\beta/\gamma_0,\,p \eta\}}} \fa n \in \mbb{N}.
	\end{gather}
\end{theorem}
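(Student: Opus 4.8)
The plan is a Zvonkin-type transformation via the It\^o--Tanaka trick, followed by Gr\"onwall's inequality. \emph{Step 1 (auxiliary Kolmogorov equation).} For fixed $T$ I would first construct a bounded $\mathbb{R}^d$-valued $u$ on $[0,T]\times\mathbb{R}^d$, with $u(T,\cdot)=0$, solving $\partial_t u+A_x u+b\cdot\nabla_x u=-b$, together with quantitative H\"older estimates for it. With $(P_r)_{r\ge0}$ the convolution semigroup generated by $A_x$, the mild formulation $u(t,\cdot)=\int_t^T P_{r-t}\bigl(b(r,\cdot)+b(r,\cdot)\cdot\nabla u(r,\cdot)\bigr)\,dr$ and the gradient bound $\|\nabla P_r f\|_\infty\le\|\nabla p_r\|_{L^1}\|f\|_\infty\le c\,r^{-1/\alpha}\|f\|_\infty$ coming from \eqref{ihke} (integrable in $r$ since $\alpha>1$) yield, by a contraction argument, such a $u$ with $\|\nabla u\|_\infty$ as small as we wish once the subinterval under consideration is short. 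The crucial output is a regularity estimate taming the jump part of $u$: a bound, uniform in $s$, of the form $\int_{\mathbb{R}^d}\bigl|\Delta_z u(s,x)-\Delta_z u(s,x')\bigr|^p\,\nu(dz)\le C\,|x-x'|^p$ for $x,x'$ in a bounded set, where $\Delta_z u(s,x):=u(s,x+z)-u(s,x)-\nabla u(s,x)\cdot z\,\mathds{1}_{\{|z|\le1\}}$; here the $\beta$-H\"older regularity of $b$ in $x$, the behaviour of $\nu$ near $0$ (through $\gamma_0$) and the balance condition \eqref{bc} come together --- \eqref{bc} is precisely what guarantees that $u(s,\cdot)$ is smooth enough beyond $C^1$ for this $\nu$-integral to converge and to scale linearly in $|x-x'|^p$.

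\emph{Step 2 (It\^o--Tanaka representation).} Applying It\^o's formula to $u(t,X_t)$ along a solution $X$ of \eqref{sde} (after mollifying $u$, if needed, and passing to the limit) and using the equation gives
\[
\int_0^t b(s,X_s)\,ds=u(0,x)-u(t,X_t)+M_t,\qquad
M_t=\int_0^t\!\!\int \Delta_z u(s,X_{s-})\,\tilde N(ds,dz),
\]
with $M$ a martingale. For the Euler--Maruyama process $X^{(n)}$ the drift in the generator is frozen at the preceding grid point, which produces an extra term $R_t^{(n)}:=\int_0^t\nabla u(s,X^{(n)}_s)\cdot\bigl(b^{(n)}(s)-b(s,X^{(n)}_s)\bigr)\,ds$, where $b^{(n)}(s)$ is the value of $b$ used by the scheme on the subinterval containing $s$, and $M^{(n)}$ is $M$ with $X^{(n)}$ in place of $X$ (both driven by the same Poisson random measure). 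Subtracting the two representations, after inserting $\pm b(s,X^{(n)}_s)$ into the drift of $V_t:=X_t-X^{(n)}_t$, yields
\[
V_t=\bigl(u(t,X^{(n)}_t)-u(t,X_t)\bigr)+(M_t-M^{(n)}_t)-R^{(n)}_t+J_t,\qquad J_t:=\int_0^t\bigl(b(s,X^{(n)}_s)-b^{(n)}(s)\bigr)\,ds.
\]

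\emph{Step 3 (estimates and Gr\"onwall).} I would work on a subinterval on which $\|\nabla u\|_\infty\le\tfrac12$, so that $|u(t,X^{(n)}_t)-u(t,X_t)|\le\tfrac12|V_t|$ can be absorbed after taking $\mathbb{E}\sup|\cdot|^p$ (note $|V_t|\le 2T\|b\|_\infty$, so all moments are finite). For the martingale difference, Kunita's inequality together with the Step-1 estimate and the crude $\|\nabla u\|_\infty$-bound on large jumps gives $\mathbb{E}\sup_{r\le t}|M_r-M^{(n)}_r|^p\le C\int_0^t\mathbb{E}\sup_{r\le s}|V_r|^p\,ds$, a Gr\"onwall term. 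The terms $R^{(n)}$ and $J$ are controlled by the regularity of $b$: $|b(s,X^{(n)}_s)-b^{(n)}(s)|\le C\bigl(|s-\eta_n(s)|^\eta+|X^{(n)}_s-X^{(n)}_{\eta_n(s)-}|^\beta\bigr)$, and the one-step increment $X^{(n)}_s-X^{(n)}_{\eta_n(s)-}=\int_{\eta_n(s)}^s b^{(n)}\,dr+(L_s-L_{\eta_n(s)})$ has drift part $O(1/n)$; since $Q=0$, $\int_{|z|\le1}|z|^{\gamma_0}\,\nu(dz)<\infty$ and $\int_{|z|\ge1}|z|^{\gamma_\infty}\,\nu(dz)<\infty$, the L\'evy increment satisfies $\mathbb{E}|L_h|^q\le C\,h^{\min\{1,\,q/\gamma_0\}}$ for $0<h\le1$, $q\le\gamma_\infty$ (this is where $p\le\gamma_\infty$ is used). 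Raising to the power $\beta$, taking $p$-th moments and integrating in time via Jensen then gives $\mathbb{E}\sup_{[0,T]}|R^{(n)}|^p+\mathbb{E}\sup_{[0,T]}|J|^p\le C\,n^{-\min\{1,\,p\beta/\gamma_0,\,p\eta\}}$, the exponent $1$ arising from the $O(1/n)$ drift increment. Gr\"onwall's lemma on the subinterval yields \eqref{ineq} there, and, iterating over finitely many subintervals (using on each a solution of the Kolmogorov equation with terminal value $0$ at its right endpoint), on all of $[0,T]$. Finally, applying the same representation to the difference of two solutions of \eqref{sde} with the same driving noise --- now without any discretization terms --- and running the identical absorb-plus-Gr\"onwall argument forces the solutions to coincide, i.e.\ pathwise uniqueness; existence follows from \cite{chen}, or directly from \eqref{ineq} (with $X^{(m)}$ in place of $X$), which makes $(X^{(n)})_n$ Cauchy in $L^p(\Omega;C([0,T];\mathbb{R}^d))$ with limit solving \eqref{sde}.

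\emph{Main obstacle.} The analytic heart, and the step I expect to be hardest, is Step 1: producing a solution of \eqref{intro-eq7} with exactly the fractional regularity that turns the martingale difference (and the remainder $R^{(n)}$) into a genuine Gr\"onwall term, i.e.\ with the $\nu$-integral of $|\Delta_z u(s,x)-\Delta_z u(s,x')|^p$ scaling like $|x-x'|^p$. The admissible smoothness of $u$ is bounded above by the order ``$\alpha$'' of $A_x$ encoded in \eqref{ihke} and demanded from below by the singularity of $\nu$ at the origin (through $\gamma_0$) and the roughness of $b$ (through $\beta$); reconciling the two is precisely what the balance condition $2\alpha-\gamma_0(1-\beta)>2$ achieves. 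Deriving the required Schauder-type estimates for the nonlocal operator $A_x$ directly from the bare hypothesis \eqref{ihke}, rather than from an explicit Fourier symbol, is the delicate part of the argument.
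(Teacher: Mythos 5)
Your proposal follows essentially the same route as the paper: construct a backward Kolmogorov solution $u$ on short subintervals with $\|\nabla u\|_\infty$ small and with the extra fractional H\"older regularity of $\nabla u$ needed to tame the small-jump martingale, apply It\^o--Tanaka, and close with a Gr\"onwall/absorption argument, iterating over finitely many subintervals; the paper realizes your ``crucial output'' in Step~1 by proving $\|\nabla u(t,\cdot)\|_{\mathcal{C}_b^{\gamma_0/2}}\le\varepsilon$ via interpolation between the $L^1$-bounds on $\partial_{x_i}p_t$ and $\partial_{x_i}\partial_{x_k}p_t$, which yields exactly the $|H(s,y)|\le\varepsilon\min\{|V|\,|y|^{\gamma_0/2},\,2|V|\}$ bound you need. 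The only structural difference is cosmetic: the paper applies It\^o only to the Euler--Maruyama iterates and proves $(X^{(n)})_n$ is Cauchy in $L^p(\Omega;D[0,T])$, obtaining existence and the rate simultaneously (and then uniqueness by comparing any solution $Y$ to $X^{(n)}$), whereas you primarily compare a presupposed solution $X$ to $X^{(n)}$ and mention the Cauchy-sequence route as an alternative.
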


\begin{remark} \label{main5}
\firstpara{(i)}
	For the existence of a pathwise unique solution to \eqref{sde} it is crucial that the mapping $x \mapsto b(t,x)$ is sufficiently regular. For instance, if $(L_t)_{t \geq 0}$ is an isotropic $\alpha$-stable L\'evy process, then the SDE \begin{equation*}
		dX_t = b(X_{t-}) \, dt+ dL_t
	\end{equation*}
	fails, in general, to have a pathwise unique solution if $b$ is $\beta$-H\"older continuous with $\beta+\alpha<1$, cf.\ \cite{tanaka}; recently, Kulik \cite{kulik} has shown that the SDE admits a pathwise unique solution if $\beta+\alpha>1$. This shows that there is a trade-off and compensation between the (lack of) regularity of the driving L\'evy noise and the (lack of) regularity of the coefficient $x\mapsto b(x)$.

\para{(ii)}
    Condition \eqref{ihke} is equivalent to saying that the semigroup $P_t \phi(x) := \mbb{E}\phi(x+L_t)$, $\phi \in \mc{B}_b(\mbb{R}^d)$, associated with the L\'evy process $(L_t)_{t \geq 0}$ satisfies the gradient estimate
    \begin{equation*}
    	\|\nabla P_t \phi\|_{\infty}
        \leq ct^{-1/\alpha} \|\phi\|_{\infty},
        \quad \phi \in \mc{B}_b(\mbb{R}^d),
    \end{equation*}
    cf.\ Lemma~\ref{p-0}. If $(L_t)_{t \geq 0}$ is subordinate to a Brownian motion, then \eqref{ihke} can also be understood as a moment estimate, cf.\ Lemma~\ref{p-7}.

\para{(iii)}
    The existence of the moments $\int_{|z| \leq 1} |z|^{\gamma_0} \, \nu(dz)$ and $\int_{|z| \geq 1} |z|^{\gamma_{\infty}} \, \nu(dz)$ is related to the growth of the characteristic exponent $\psi$, cf.\ Lemma~\ref{ex-1};  Lemma~\ref{ex-1} is very useful since it allows us to verify the assumptions of Theorem~\ref{main1} if the L\'evy measure $\nu$ cannot be calculated explicitly.

\para{(iv)}
    A sufficient condition for the existence of a transition probability density $p_t \in C^2(\mbb{R}^d)$ for all $t>0$ is the Hartman--Wintner condition: \begin{equation*}
		\lim_{|\xi| \to \infty} \frac{\re \psi(\xi)}{\log(1+|\xi|)} = \infty,
	\end{equation*}
	cf.\ \cite{knop} for a thorough discussion.

\para{(v)}
    Let $(X_t)_{t \geq 0}$ be a solution to \eqref{sde}. Since $b$ is bounded, we have for any $t>0$
    \begin{equation*}
		X_t \in L^p(\mbb{P})
        \iff L_t \in L^p(\mbb{P})
        \iff \int_{|z| \geq 1} |z|^p \, \nu(dz) < \infty
	\end{equation*}
    -- for the second equivalence see Sato \cite{sato} --, i.e.\ the solution inherits the integrability of the driving L\'evy process and vice versa. This means that, in general, we cannot expect \eqref{ineq} to hold for $p>\gamma_{\infty}$.
   
   \para{(vi)}
	   A slight variation of our arguments, see the uniqueness part of the proof of Theorem~\ref{main1} on page \pageref{p-eq37}, allows us to derive the estimate \begin{equation*}
		   \mbb{E} \left( \sup_{t \in [0,T]} |X_t(x)-X_t(y)|^p \right) \leq C' |x-y|^p, \qquad p \leq \gamma_{\infty}
	  \end{equation*}
	   for some constant $C'>0$ where $X_t(z)$ denotes the solution to the SDE with initial condition $X_0(z)=z$.  If $\gamma_{\infty}>d$ it follows from a standard Kolmogorov--Chentsov--Totoki argument that $x\mapsto X_t(x)$ is H\"older continuous of order $\kappa<1-d/\gamma_{\infty}$.
\end{remark}

Let us give some further remarks on possible extensions of Theorem~\ref{main1}.

\begin{remark}
\firstpara{(i)}
    If $(L_t)_{t \geq 0}$ has a non-vanishing (possibly degenerate) diffusion part, then the statement of Theorem~\ref{main1} remains valid for $\gamma_0 := 2$. In particular, if $(W_t)_{t \geq 0}$ is a Brownian motion and $b: [0,\infty) \times \mbb{R}^d \to \mbb{R}^d$ is a bounded function which is $\beta$-H\"{o}lder continuous with respect to $x$ and $\eta$-H\"{o}lder-continuous with respect to $t$ for some $\beta,\eta \in (0,1]$, then the SDE
    \begin{gather*}
    	dX_t = b(t,X_t) \, dt + dW_t, \quad X_0 = x
    \intertext{has a pathwise unique solution, and for any $p>0$, $T>0$ there exists a constant $C>0$ such that}
    	\mbb{E} \left( \sup_{0 \leq t \leq T} |X_t-X_t^{(n)}|^p \right) \leq C {n^{-\min\{1,\,p\beta/2,\, p\eta\}}} \fa n \in \mbb{N};
    \end{gather*}
    this result extends \cite[Theorem 2.11]{pamen}.

   \para{(ii)}  A close inspection of our arguments reveals that the H\"{o}lder condition on $t\mapsto b(t,x)$ can be replaced by uniform continuity; if we denote by \begin{equation*}
	   w(\delta) := \sup_{x \in \mbb{R}^d} \sup_{|s-t| \leq \delta} |b(t,x)-b(s,x)|
	  \end{equation*}
	  the modulus of continuity of $t \mapsto b(t,x)$ (uniformly in $x$), then \eqref{ineq} becomes \begin{equation*}
		  \mbb{E} \left( \sup_{0 \leq t \leq T} |X_t-X_t^{(n)}|^p \right) \leq C n^{-\min\{1,p \beta/\gamma_0\}} + C w(1/n)^p, \qquad n \in \mbb{N},
	 \end{equation*}
	for a suitable constant $C>0$.
	
  \para{(iii)} Case $\alpha=1$: If we replace \eqref{ihke} by  \begin{equation*}
	  \int_{\mbb{R}^d} |\partial_{x_i} p_t(x)| \, dx 
        \leq c t^{-1} \log^{-1-\eps} t, \quad i \in \{1,\ldots,d\}, \; t \in (0,T]
	 \end{equation*}
	 for some $\eps>0$, then the statement of Theorem~\ref{main1} holds for $\alpha=1$.
 \end{remark}
 
 \begin{remark}
	 It is a natural question whether it is possible to consider \eqref{intro-eq1} with multiplicative noise, i.\,e.\ with $\sigma(t,X_{t-}) \, dL_t$ instead of $dL_t$. This, however, requires different methods than the ones used here, see the recent paper \cite{chen2} by Chen et.\ al; in particular, it is not clear how to find a solution to the PDE \eqref{p-eq21} appearing in Theorem~\ref{p-5}. 
\end{remark}

Combining Theorem~\ref{main1} with the gradient estimates in \cite{ssw} we can easily prove the following statement which covers many interesting and important examples of L\'evy processes, cf.\ Section~\ref{ex}.
\begin{corollary} \label{main3}
    Let $(L_t)_{t \geq 0}$ be a $d$-dimensional L\'evy process with characteristic exponent $\psi$ and L\'evy triplet $(\ell,0,\nu)$.  Let $\gamma_0 \in [1,2]$, $\gamma_{\infty}>0$ be exponents such that $\int_{|z| \leq 1} |z|^{\gamma_0} \, \nu(dz) < \infty$ and $\int_{|z| \geq 1} |z|^{\gamma_{\infty}} \, \nu(dz)<\infty$. Assume that there exists a strictly increasing function $f: (0,\infty) \to [0,\infty)$ which is differentiable near infinity and satisfies the following conditions.
    \begin{enumerate}
		\item $c^{-1}f(|\xi|) \leq \re \psi(\xi) \leq c f(|\xi|)$ as $|\xi| \to \infty$ for some constant $c\in(0,\infty)$;
		\item $\limsup_{r \to \infty} f^{-1}(2r)/f^{-1}(r) < \infty$;
		\item there exist constants $\alpha \in (1,2]$ and $c>0$ such that $f(r) \geq c r^{\alpha}$ for large $r>0$.
	\end{enumerate}
    Let $b: [0,\infty) \times \mbb{R}^d \to \mbb{R}^d$ be a bounded function which is $\beta$-H\"{o}lder continuous with respect to $x$ and $\eta$-H\"{o}lder continuous with respect to $t$ for some $\beta,\eta \in (0,1]$. If the balance condition holds
    \begin{gather*}
		2 \alpha-\gamma_0 (1-\beta)>2,
	\intertext{then the SDE}
			dX_t = b(t,X_{t-}) \, dt + dL_t, \quad X_0 = x
    \intertext{has a pathwise unique strong solution $(X_t)_{t \geq 0}$, and for any $p \leq \gamma_{\infty}$ and $T>0$ there exists a constant $C>0$ such that}
		\mbb{E} \left( \sup_{0 \leq t \leq T} |X_t-X_t^{(n)}|^p \right)
        \leq C {n^{-\min\{1,\,p\beta/\gamma_0,\,p\eta\}}} \fa n \in \mbb{N}.
	\end{gather*}
\end{corollary}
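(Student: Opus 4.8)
The plan is to deduce Corollary~\ref{main3} directly from Theorem~\ref{main1} by verifying that hypotheses (i)--(iii) on the comparison function $f$ imply the analytic assumption \eqref{ihke}, i.e.\ the $L^1$-gradient estimate $\int_{\mbb{R}^d} |\partial_{x_i} p_t(x)|\,dx \leq c t^{-1/\alpha}$ for $t \in (0,T]$; once this is in place, all remaining hypotheses of Theorem~\ref{main1} (the moment conditions on $\nu$, the H\"older regularity and boundedness of $b$, the balance condition $2\alpha - \gamma_0(1-\beta)>2$) are assumed verbatim in the Corollary, so the conclusion transfers with no extra work. The transition density $p_t$ exists and lies in $C^2(\mbb{R}^d)$ for all $t>0$ because condition (i) together with (iii) forces $\re\psi(\xi) \geq c|\xi|^\alpha$ for large $|\xi|$, hence $\re\psi(\xi)/\log(1+|\xi|)\to\infty$, which is the Hartman--Wintner condition recalled in Remark~\ref{main5}(iv).

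The core of the argument is the gradient estimate. I would invoke the gradient estimates of \cite{ssw}: under a lower scaling/doubling hypothesis on $\re\psi$ one has $\|\nabla P_t\phi\|_\infty \leq c\, g(t)\, \|\phi\|_\infty$ for $\phi \in \mc{B}_b(\mbb{R}^d)$, where $g(t)$ is governed by the generalized inverse $f^{-1}$ — concretely $g(t) \asymp f^{-1}(1/t)$ as $t \to 0$, the inverse being well-defined for large arguments since $f$ is strictly increasing. By Lemma~\ref{p-0} this $\sup$-norm gradient estimate is equivalent to the $L^1$-estimate $\int |\partial_{x_i} p_t|\,dx \leq c\, f^{-1}(1/t)$. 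Condition (ii), $\limsup_{r\to\infty} f^{-1}(2r)/f^{-1}(r)<\infty$, is precisely the doubling property needed to make the estimates of \cite{ssw} applicable (it controls $f^{-1}$ along dyadic scales and hence everywhere), and condition (iii), $f(r)\geq c r^\alpha$ for large $r$, yields $f^{-1}(s) \leq c' s^{1/\alpha}$ for large $s$, so that $f^{-1}(1/t) \leq c' t^{-1/\alpha}$ as $t \to 0$. For $t$ bounded away from $0$ (up to $T$) the map $t\mapsto \int|\partial_{x_i}p_t|\,dx$ is continuous and finite — again because $p_t \in C^1$ with integrable gradient, which one gets from the Hartman--Wintner bound — so it is bounded on $[t_0,T]$ and the estimate $\leq c(T) t^{-1/\alpha}$ holds on all of $(0,T]$ after adjusting the constant.

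Having established \eqref{ihke} with this very $\alpha$ from condition (iii), I would then simply apply Theorem~\ref{main1}: it delivers the pathwise unique strong solution of $dX_t = b(t,X_{t-})\,dt + dL_t$ and the rate $\mbb{E}(\sup_{t\le T}|X_t - X_t^{(n)}|^p) \leq C n^{-\min\{1,\,p\beta/\gamma_0,\,p\eta\}}$ for $p \leq \gamma_\infty$, which is exactly the assertion of the Corollary.

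The main obstacle is the careful matching of the asymptotic scaling regime of \cite{ssw} to the hypothesis \eqref{ihke}, which demands a \emph{polynomial} bound $t^{-1/\alpha}$ uniform on the whole interval $(0,T]$, whereas the natural output of the gradient estimates is the possibly non-polynomial quantity $f^{-1}(1/t)$ valid only as $t\to 0$. Bridging this requires (a) passing from the small-$t$ asymptotics to a global bound on $(0,T]$ via the continuity/finiteness of $\int|\partial_{x_i}p_t|\,dx$ on compact subintervals, and (b) using (ii)+(iii) to dominate $f^{-1}(1/t)$ by $c t^{-1/\alpha}$ — here one must be a little cautious that the $\alpha$ appearing in (iii) need not be the "true" index of $\psi$ at infinity (it is only a lower bound on the growth), but that is harmless because Theorem~\ref{main1} only ever uses \eqref{ihke} as an upper bound and the balance condition is assumed with this same $\alpha$. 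Everything else is bookkeeping: the moment hypotheses and the properties of $b$ are carried over unchanged.
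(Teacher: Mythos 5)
Your proposal follows essentially the same route as the paper: invoke the gradient estimate from \cite{ssw} (their Theorem~1.3), convert it to the $L^1$-estimate \eqref{ihke} via Lemma~\ref{p-0}, check the Hartman--Wintner condition for $p_t\in C^2$, and then apply Theorem~\ref{main1}. You supply somewhat more detail on how hypotheses (ii) and (iii) turn the natural $f^{-1}(1/t)$ bound into the polynomial bound $t^{-1/\alpha}$, but this is exactly the content of \cite[Theorem 1.3]{ssw} that the paper cites directly, so the two arguments are identical in substance.
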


Typical examples for $f$ are $f(r) = r^{\alpha}$ and $f(r) = r^{\alpha} \log^\beta(1+r)$, see Section~\ref{ex}.

\medskip
For the particular case that the driving L\'evy process is subordinate to a Brownian motion, Theorem~\ref{main1} has the following corollary.
\begin{corollary} \label{main2}
    Let $L_t = B_{S_t}$ be a $d$-dimensional Brownian motion subordinated by a subordinator $(S_t)_{t \geq 0}$ with Laplace exponent \textup{(}Bernstein function\textup{)} $f$,
    \begin{equation*}
		f(\lambda) = \int_{(0,\infty)}(1-e^{-\lambda r}) \, \mu(dr), \quad \lambda \geq 0.
	\end{equation*}
	Assume that there exist constants $\delta_0 \in [1/2,1]$, $\delta_{\infty} >0$ and $\rho \in (1/2,1]$ such that
    \begin{gather*}
		\smash[b]{\int_{(0,1)} r^{\delta_0} \, \mu(dr) + \int_{(1,\infty)} r^{\delta_{\infty}} \, \mu(dr)< \infty}
	\intertext{and}
		\smash[t]{\liminf_{\lambda \to \infty} \frac{f(\lambda)}{\lambda^{\rho}}>0.}
	\end{gather*}
    Let $b: [0,\infty) \times \mbb{R}^d \to \mbb{R}^d$ be a bounded function which is $\beta$-H\"{o}lder continuous with respect to $x$ and $\eta$-H\"{o}lder continuous with respect to $t$ for some $\eta \in (0,1]$.  If
    \begin{gather*}
		2 \rho- \delta_0 (1-\beta)>1
	\intertext{then the SDE}
		dX_t = b(t,X_{t-}) \, dt + dL_t, \quad X_0 = x
	\intertext{has a unique strong solution $(X_t)_{t \geq 0}$ and for any $p \leq 2\delta_{\infty}$ and $T>0$ there exists a constant $C>0$ such that}
		\mbb{E} \left( \sup_{0 \leq t \leq T} |X_t-X_t^{(n)}|^p \right) \leq C {n^{-\min\{1,\,p\beta/(2\delta_0),\,p\eta\}}} \fa n \in \mbb{N}.
	\end{gather*}
\end{corollary}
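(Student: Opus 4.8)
The plan is to deduce the corollary from Theorem~\ref{main1} by matching the parameters as $\gamma_0 = 2\delta_0$, $\gamma_{\infty} = 2\delta_{\infty}$ and $\alpha = 2\rho$, and then checking the three structural hypotheses of the theorem (a $C^2$ transition density, the gradient estimate \eqref{ihke}, and the two moment bounds on $\nu$) against the assumptions on the subordinator. Write $(Q_s)_{s \geq 0}$ for the transition semigroup of the underlying $d$-dimensional Brownian motion $(B_s)_{s \geq 0}$ and $g_s$ for its Gaussian transition density (normalised so that $\mbb{E}\,e^{i\xi \cdot B_s} = e^{-s|\xi|^2/2}$). Then $L_t = B_{S_t}$ has transition density $p_t(x) = \int_{(0,\infty)} g_s(x)\,\mbb{P}(S_t \in ds)$ and characteristic exponent $\psi(\xi) = f(\tfrac12|\xi|^2)$, so the hypothesis $\liminf_{\lambda \to \infty} f(\lambda)/\lambda^{\rho} > 0$ yields $\re\psi(\xi) = f(\tfrac12|\xi|^2) \geq c|\xi|^{2\rho}$ for large $|\xi|$. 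In particular the Hartman--Wintner condition of Remark~\ref{main5}(iv) holds, so $p_t \in C^{\infty}(\mbb{R}^d)$ for every $t > 0$, and $\alpha := 2\rho$ lies in $(1,2]$ because $\rho \in (1/2,1]$.

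Next I would verify \eqref{ihke} with this $\alpha$. Since $P_t \phi = \mbb{E}\,Q_{S_t}\phi$, differentiating under the integral and integrating the Gaussian bound $\int_{\mbb{R}^d} |\partial_{x_i} g_s(x)|\,dx \leq c s^{-1/2}$ against the law of $S_t$ (Tonelli) gives $\int_{\mbb{R}^d} |\partial_{x_i} p_t(x)|\,dx \leq c\,\mbb{E}(S_t^{-1/2})$; this equivalence between \eqref{ihke} and a moment estimate for $S_t$ is the content of Lemma~\ref{p-7}. It then remains to show $\mbb{E}(S_t^{-1/2}) \leq C(T)\,t^{-1/(2\rho)}$ for $t \in (0,T]$. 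From the subordination identity $\mbb{E}(S_t^{-1/2}) = \Gamma(\tfrac12)^{-1} \int_0^{\infty} \lambda^{-1/2} e^{-t f(\lambda)}\,d\lambda$ and the lower bound $f(\lambda) \geq c\lambda^{\rho}$, valid for $\lambda \geq \lambda_0$, I split the integral at $\lambda_0$: the part over $(0,\lambda_0)$ is bounded by the constant $2\sqrt{\lambda_0}$, while the part over $(\lambda_0,\infty)$ is at most $\int_0^{\infty} \lambda^{-1/2} e^{-c t \lambda^{\rho}}\,d\lambda = c'\,t^{-1/(2\rho)}$ after the substitution $v = c t \lambda^{\rho}$. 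Since $t^{-1/(2\rho)} \geq T^{-1/(2\rho)}$ on $(0,T]$, the constant term is absorbed into $C(T)\,t^{-1/(2\rho)}$, which establishes \eqref{ihke}.

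It remains to check the moment conditions on the L\'evy measure $\nu(dz) = \bigl(\int_{(0,\infty)} g_s(z)\,\mu(ds)\bigr)\,dz$, for which I would use $\int_{\mbb{R}^d} |z|^q g_s(z)\,dz = c_q\, s^{q/2}$ together with the L\'evy property $\int (1 \wedge s)\,\mu(ds) < \infty$. Splitting $\mu$ at $s = 1$: for $s \geq 1$ one has $\int_{|z| \leq 1} |z|^{2\delta_0} g_s(z)\,dz \leq \mu([1,\infty)) < \infty$, and for $s < 1$, $\int_{|z| \leq 1} |z|^{2\delta_0} g_s(z)\,dz \leq c_{2\delta_0}\, s^{\delta_0}$, so $\int_{|z| \leq 1} |z|^{2\delta_0}\,\nu(dz) \leq C + c \int_{(0,1)} s^{\delta_0}\,\mu(ds) < \infty$. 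Likewise, for $s \geq 1$, $\int_{|z| \geq 1} |z|^{2\delta_{\infty}} g_s(z)\,dz \leq c_{2\delta_{\infty}}\, s^{\delta_{\infty}}$, while for $s < 1$ the Gaussian tail yields $\int_{|z| \geq 1} |z|^{2\delta_{\infty}} g_s(z)\,dz \leq C s$, so $\int_{|z| \geq 1} |z|^{2\delta_{\infty}}\,\nu(dz) \leq c \int_{(1,\infty)} s^{\delta_{\infty}}\,\mu(ds) + C \int_{(0,1)} s\,\mu(ds) < \infty$. Thus $\gamma_0 := 2\delta_0 \in [1,2]$ (using $\delta_0 \in [1/2,1]$) and $\gamma_{\infty} := 2\delta_{\infty} > 0$ satisfy the moment hypotheses of Theorem~\ref{main1}.

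Finally, the balance condition of Theorem~\ref{main1} becomes $2\alpha - \gamma_0(1-\beta) = 2\bigl(2\rho - \delta_0(1-\beta)\bigr) > 2$, i.e.\ $2\rho - \delta_0(1-\beta) > 1$, which is precisely the assumed inequality. Theorem~\ref{main1} then provides the pathwise unique strong solution together with the estimate, for $p \leq \gamma_{\infty} = 2\delta_{\infty}$, $\mbb{E}(\sup_{0 \leq t \leq T}|X_t - X_t^{(n)}|^p) \leq C\,n^{-\min\{1,\,p\beta/\gamma_0,\,p\eta\}} = C\,n^{-\min\{1,\,p\beta/(2\delta_0),\,p\eta\}}$, which is the claim. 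The only genuinely technical point is the moment estimate $\mbb{E}(S_t^{-1/2}) \leq C(T)\,t^{-1/(2\rho)}$ of the second paragraph; the remaining steps are essentially bookkeeping, and that estimate is in any case recorded in Lemma~\ref{p-7}.
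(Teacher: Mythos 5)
Your proof is correct and follows the paper's overall strategy of reducing to Theorem~\ref{main1} with $\alpha=2\rho$, $\gamma_0=2\delta_0$, $\gamma_\infty=2\delta_\infty$, but it handles the two technical hypotheses more self-containedly than the paper does. To verify \eqref{ihke} the paper goes through Lemma~\ref{p-7}, which uses the radial identity \eqref{diff} to turn $\int|\partial_{x_i}p_t^{(d)}|\,dx$ into a negative first moment of the $(d+2)$-dimensional subordinate process, and then factorises $\mbb{E}(|L_t^{(d+2)}|^{-1})=\mbb{E}(S_t^{-1/2})\,\mbb{E}(|B_1^{(d+2)}|^{-1})$ via independence and Brownian scaling; you reach the same reduction $\int|\partial_{x_i}p_t|\,dx\leq c\,\mbb{E}(S_t^{-1/2})$ in one step by differentiating the subordination mixture $p_t(x)=\int g_s(x)\,\mbb{P}(S_t\in ds)$ and integrating $\int|\partial_{x_i}g_s|\,dx\leq c\,s^{-1/2}$ against the law of $S_t$ --- so the attribution to Lemma~\ref{p-7} is loose, but the endpoint is identical. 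More substantively, for $\mbb{E}(S_t^{-1/2})\leq C(T)\,t^{-1/(2\rho)}$ the paper simply cites \cite[Theorem 3.17]{deng}, whereas you derive it from scratch via the Gamma-integral identity $\mbb{E}(S_t^{-1/2})=\Gamma(1/2)^{-1}\int_0^\infty\lambda^{-1/2}e^{-tf(\lambda)}\,d\lambda$ and a split at the threshold beyond which $f(\lambda)\geq c\lambda^{\rho}$; the substitution $v=ct\lambda^{\rho}$ does give $c'\,t^{-1/(2\rho)}$ and the finite-$\lambda$ piece is absorbed on $(0,T]$, so the computation is sound. Finally, your explicit check of the two moment conditions on $\nu$ (which the paper asserts without proof) is correct; the only point worth flagging is that the small-$s$/large-$|z|$ estimate $\int_{|z|\geq 1}|z|^{2\delta_\infty}g_s(z)\,dz\leq Cs$ needs the full superpolynomial Gaussian tail (not merely Brownian scaling, which would only give $s^{\delta_\infty}$), and that the finiteness of $\int_{(0,1)}s\,\mu(ds)$ follows from $\int_{(0,1)}s^{\delta_0}\mu(ds)<\infty$ because $\delta_0\leq 1$. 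Both variations are correct and make the corollary's proof a bit more elementary than the version in the paper.
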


\section{Preliminaries} \label{pre}

We consider Euclidean space $\mbb{R}^d$ endowed with the Borel $\sigma$-algebra $\mathcal{B}(\mbb{R}^d)$. The open ball centered at $x \in \mbb{R}^d$ of radius $r>0$ is denoted by $B(x,r)$. For a differentiable function $f: \mbb{R}^d \to \mbb{R}$ the partial derivative with respect to $x_i$ is denoted by $\partial_{x_i} f$, and $\nabla f$ is the gradient of $f$. As usual, $C^k(\mbb{R}^d)$ is the space of $k$-times continuously differentiable functions, $\mc{B}_b(\mbb{R}^d)$ the space of bounded Borel measurable functions, and $C_{\infty}(\mbb{R}^d)$ is the space of continuous functions vanishing at infinity. For $\beta \in [0,1]$ we define H\"{o}lder spaces by
\begin{align*}
    \mc{C}_b^{\beta}(\mbb{R}^d)
    &:= \left\{f: \mbb{R}^d \to \mbb{R}^k;\; \|f\|_{\mc{C}_b^{\beta}(\mbb{R}^d)} := \sup_{x \in \mbb{R}^d} |f(x)| +  \smash{\sup_{x \neq y} \frac{|f(x)-f(y)|}{|x-y|^{\beta}}} < \infty \right\}
    \\
	\mc{C}_b^{1,\beta}(\mbb{R}^d)
    &:= \left\{f \in C^1(\mbb{R}^d,\mbb{R}^k);\; \|f\|_{\mc{C}_b^{1,\beta}(\mbb{R}^d)} := \sup_{x \in \mbb{R}^d} |f(x)| + \|\nabla f\|_{\mc{C}_b^{\beta}(\mbb{R}^d)}< \infty \right\}.
\end{align*}
For a function space $M$ and a function $g: [a,b] \times \mbb{R}^d \to \mbb{R}^k$ we write $g \in C([a,b],M)$ if $g(t,\cdot) \in M$ for all $t \in [a,b]$ and $t \mapsto g(t,\cdot)$ is continuous. Similarly, $g \in C^1([a,b],M)$ means that $\partial_t g(t,\cdot) \in M$ and $t \mapsto \partial_t g(t,\cdot)$ is continuous. If $M$ is a normed function space, then
\begin{equation*}
	\|g\|_{C([a,b],M)} := \sup_{t \in [a,b]} \|g(t,\cdot)\|_M
\end{equation*}
defines a norm on $C([a,b],M)$. For brevity we will often denote this norm by $\|g\|_M$; in particular we will write
\begin{equation*}
    \|g\|_{\mc{C}_b^{\beta}(\mbb{R}^d)}
    := \sup_{t \in [a,b]} \sup_{x \in \mbb{R}^d} |g(t,x)| + \sup_{t \in [a,b]} \sup_{x \neq y} \frac{|g(t,x)-g(t,y)|}{|x-y|^{\beta}}.
\end{equation*}
We say that a bounded function $g:[a,b] \times \mbb{R}^d \to \mbb{R}^k$ is \emph{$\beta$-H\"{o}lder continuous with respect to $x$} if $\|g\|_{\mc{C}_b^{\beta}(\mbb{R}^d)}<\infty$. \par	

Throughout, $(\Omega,\mc{A},\mbb{P})$ is a probability space. A family of random variables $L_t: \Omega \to \mbb{R}^d$, $t \geq 0$, is a \emph{$d$-dimensional L\'evy process} if $(L_t)_{t \geq 0}$ has stationary and independent increments, $t \mapsto L_t$ is, with probability $1$, right-continuous with finite left limits (c\`adl\`ag), and $L_0 = 0$. A L\'evy process can be uniquely (in the sense of finite-dimensional distributions) characterized by its \emph{characteristic exponent} $\psi: \mbb{R}^d \to \mbb{C}$,
\begin{equation}
	\mbb{E}e^{i \xi \cdot L_t} = e^{-t \psi(\xi)}, \quad t  \geq 0,\; \xi \in \mbb{R}^d; \label{pre-eq1}
\end{equation}
the exponent is given by the \emph{L\'evy--Khintchine formula}
\begin{equation}
    \psi(\xi)
    = -i \ell \cdot \xi + \frac{1}{2} \xi \cdot Q \xi + \int_{y \neq 0} \left(1-e^{iy \cdot \xi} + iy \cdot \xi \I_{(0,1)}(|y|)\right) \nu(dy) ,
    \quad \xi \in \mbb{R}^d. \label{pre-eq3}
\end{equation}
There is a one-to-one correspondence between the exponent $\psi$ and the \emph{L\'evy triplet} $(\ell,Q,\nu)$ consisting of a vector $\ell \in \mbb{R}^d$ (\emph{drift parameter}), a symmetric positive semi-definite matrix $Q \in \mbb{R}^{d \times d}$ (\emph{diffusion parameter}) and a measure $\nu$ on $(\mbb{R}^d \setminus \{0\},\mc{B}(\mbb{R}^d \setminus \{0\}))$ satisfying $\int_{y \neq 0} \min\{|y|^2,1\} \, \nu(dy)<\infty$ (\emph{L\'evy measure}).

It is not difficult to see that any L\'evy process $(L_t)_{t \geq 0}$ is a Markov process, and therefore there is a transition semigroup $(P_t)_{t \geq 0}$ and an infinitesimal generator $(A,\mc{D}(A))$ associated with $(L_t)_{t \geq 0}$. It is well known that
\begin{gather}
\notag
	P_t f(x) = \mbb{E}f(x+L_t),
    \quad x \in \mbb{R}^d,\; t \geq 0,\; f \in \mc{B}_b(\mbb{R}^d),
\intertext{and, for any $f \in C_b^2(\mbb{R}^d)$,}
\label{pre-eq5}
    Af(x)
    = \ell \cdot \nabla f(x) + \frac{1}{2} \dv\big(Q \nabla f(x)\big) + \int_{y \neq 0} \left( f(x+y)-f(x)-\nabla f(x) \cdot y \I_{(0,1)}(|y|) \right) \nu(dy).
\end{gather}
If $(L_t)_{t \geq 0}$ is a L\'evy process with L\'evy triplet $(\ell,0,\nu)$ and $\int_{|y| \leq 1} |y|^{\gamma} \, \nu(dy) < \infty$ for some $\gamma \in [1,2]$, then $\mc{C}_b^{1,\gamma-1}(\mbb{R}^d) \subseteq \mc{D}(A)$, and \eqref{pre-eq5} holds for any $f \in \mc{C}_b^{1,\gamma-1}(\mbb{R}^d)$, cf.\ \cite[Theorem 4.1]{ihke}; moreover,
\begin{equation}\label{pre-eq6}
    \|Af\|_{\infty}
    \leq 2 \left( |\ell| + \int_{y \neq 0} \min\{|y|^{\gamma},1\} \, \nu(dy) \right)  \|f\|_{\mc{C}_b^{1,\gamma-1}(\mbb{R}^d)}
    \fa f \in \mc{C}_b^{1,\gamma-1}(\mbb{R}^d).
\end{equation}
For a function $f=f(t,x)$ we indicate by $A_x f(t,x)$ that the operator $A$ acts on the variable $x$ for any fixed $t$. Our standard reference for L\'evy processes is the monograph \cite{sato} by Sato.

A L\'evy process $(S_t)_{t\geq 0}$ with non-decreasing sample paths is called a \emph{subordinator}. It can be uniquely characterized by its Laplace exponent (Bernstein function) $f(r) := \log \mbb{E} e^{- rS_t}$,
\begin{equation*}
	f(r) = br + \int_{(0,\infty)} (1-e^{\lambda r}) \, \mu(dr), \quad r>0,
\end{equation*}
where $b \geq 0$ and $\mu$ is a measure on $(0,\infty)$ such that $\int_{(0,\infty)} \min\{r,1\} \, \mu(dr)<\infty$. If $(L_t)_{t \geq 0}$ is a L\'evy process with characteristic exponent $\psi$ and $(S_t)_{t \geq 0}$ a subordinator with Laplace exponent $f$ such that $(L_t)_{t \geq 0}$ and $(S_t)_{t \geq 0}$ are independent, then the subordinate process
\begin{equation*}
	X_t(\omega) := L_{S_t(\omega)}(\omega), \quad \omega \in \Omega,\; t \geq 0,
\end{equation*}
is a L\'evy process and its characteristic exponent is given by $f(\psi(\xi))$, see \cite{bernstein} for further details.

A stochastic differential equation (SDE) driven by a L\'evy process is of the form
\begin{equation*}
    dX_t = b(t,X_{t-}) \, dt + g(t,X_{t-}) \, dL_t
\end{equation*}
for suitable coefficients $b$, $g$ and an initial condition fixing $X_0$. The \emph{Euler--Maruyama approximation} of the SDE is given by \begin{equation}\label{pre-eq9}
	X_t^{(n)} := X_0 + \int_0^t b(s,X_{\eta_n(s)-}^{(n)}) \, ds + \int_0^t g(s,X_{\eta_n(s)-}^{(n)}) \, dL_s,
    \quad t \in [0,T]
\end{equation}
where $\eta_n(s) :=T \frac{i}{n}$ for any $s \in [T\frac{i}{n},T\frac{i+1}{n})$, $i =0,1,\dots, n$, for fixed $T>0$. We say that the SDE has a \emph{pathwise unique} solution if for any two solutions $(X_t)_{t \geq 0}$ and $(Y_t)_{t \geq 0}$ such that $X_0 = Y_0$, we have
\begin{equation*}
	\mbb{P} \left( \forall t \geq 0: X_t = Y_t \right)=1.
\end{equation*}
We refer to Ikeda--Watanabe \cite{ikeda} and Protter \cite{protter} for a thorough discussion of stochastic integration and SDEs. A key tool for the proof of Theorem~\ref{main1} is Novikov's \cite{novikov} version of the Burkholder--Davis--Gundy inequality for jump processes which holds for \emph{any} $p>0$. Let us state this result from \cite{novikov} in our notation.

\begin{theorem}[Novikov 1975] \label{pre-1}
	Let $\tilde{N}$ be a Poisson random measure with compensator $\hat{N}$ of the form $\hat{N}(dy,ds) = \nu(dy) \, ds$, and let $H$ be a predictable stochastic process. \begin{enumerate}
		\item\label{pre-1-i} If $\mathbb{E}\left( \int_0^T \!\! \int_{y \neq 0} |H(s,y)| \, \nu(dy) \, ds \right)<\infty$ then \begin{equation*}
			\mbb{E} \left( \sup_{t \leq T} \left| \int_0^t\!\! \int H(s,y) \tilde{N}(dy,ds) \right|^p \right) 
			\leq C_{p,\alpha} \mbb{E} \left[ \left( \int_0^T \!\! \int |H(s,y)|^{\alpha} \, \nu(dy) \, ds \right)^{p/\alpha} \right]
		\end{equation*}
		for any $\alpha \in [1,2]$ and $p \in [0,\alpha]$; here $C_{p,\alpha}$ is an absolute constant depending only on $\alpha$ and $p$.
		\item\label{pre-1-ii} If $\mbb{E} \left( \int_0^T \!\! \int_{y \neq 0} |H(s,y)|^2 \, \nu(dy) \, ds \right)<\infty$ and $p \geq 2$, then there exists an absolute constant $c_p>0$ such that \begin{align*}
			\mbb{E} \left( \sup_{t \leq T} \left| \int_0^t \!\! \int H(s,y) \, \tilde{N}(dy,ds) \right|^p \right)
			&\leq c_p \mbb{E} \left[\left( \int_0^T \!\! \int |H(s,y)|^2 \, \nu(dy) \, ds \right)^{p/2} \right] \\
			&\qquad+ c_p \mbb{E} \left( \int_0^T \!\! \int |H(s,y)|^p \, \nu(dy) \, ds \right).
		\end{align*} 
	\end{enumerate}
\end{theorem}

\section{Proofs} \label{p}
Before we start to prove Theorem~\ref{main1} let us briefly explain the idea of the proof in dimension $d=1$; the general case $d>1$ follows from this by considering the coordinates. Suppose that $(X_t)_{t \geq 0}$ solves the SDE
\begin{equation*}
	dX_t = b(t,X_{t-}) \, dt + dL_t, \quad X_0 = x.
\end{equation*}
From the definition of the Euler--Maruyama approximation \eqref{pre-eq9} we see that
\begin{equation}\label{p-eq3}
	X_t-X_t^{(n)} = \int_0^t \left(b(s,X_{s-})-b(s,X_{\eta_n(s)-}^{(n)}) \right) ds,
\end{equation}
and so we have to show that the right-hand side converges in $L^p(\mbb{P})$ to $0$ as $n \to \infty$. To this end, we use the so-called It\^o-Tanaka trick: We will show that there exists a sufficiently well-behaved solution $u: [0,T] \times \mbb{R}^d \to \mbb{R}$ to the integro-differential equation \begin{equation}\label{p-eq5}
	\frac{\partial}{\partial t} u(t,x) + A_x u(t,x) + b(t,x) \cdot \nabla_x u(t,x) = - b(t,x),
    \quad u(T,x)=0
\end{equation}
for small $T>0$; by $A$ we denote the generator of the driving L\'evy process $(L_t)_{t \geq 0}$. Applying It\^o's formula, we get
\begin{align*}
	\int_0^t (b(s,X_{s-})-b(s,X_{\eta_n(s)-}^{(n)}) \, ds \approx u(t,X_t)-u(t,X_t^{(n)}) + M_t
\end{align*}
for some martingale $M$. If $u$ is sufficiently smooth, this will allow us to estimate the $L^p$-norm of right-hand side of \eqref{p-eq3} using the Burkholder--Davis--Gundy inequality, see pp.~\pageref{p-eq27}.

\medskip
In the first part of this section we establish the existence of a solution to \eqref{p-eq5}, cf.\ Theorem~\ref{p-5}. In order to make sense of \eqref{p-eq5} we have, in particular, to show that $u(\cdot,x)$ and $u(t,\cdot)$ are differentiable and that $A_x u(t,x)$ is well-defined. We start with an auxiliary result showing that \eqref{ihke} gives automatically an estimate for the integrated second derivatives $\int_{\mbb{R}^d} |\partial_{x_i} \partial_{x_i} p_t(x)| \, dx$.

\begin{lemma}\label{p-0}
    Let $(L_t)_{t \geq 0}$ be a $d$-dimensional L\'evy process with transition semigroup $(P_t)_{t \geq 0}$ and density $p_t \in C^2(\mbb{R}^d)$. Let $c: (0,T] \to [0,\infty)$ be a non-negative function.
    \begin{enumerate}
	\item\label{p-0-i}
        $\int_{\mbb{R}^d} |\partial_{x_i} p_t(x)| \, dx \leq c(t)
        \iff \forall \phi \in \mc{B}_b(\mbb{R}^d): \|\partial_{x_i} P_t \phi\| \leq c(t) \| \phi\|_{\infty}$.
        If one \textup{(}hence both\textup{)} of the conditions is satisfied, then $\nabla P_t \phi = P_t (\nabla \phi)$ for any $\phi \in C_b^1(\mbb{R}^d)$.
		
    \item\label{p-0-ii}
        If $\|\partial_{x_i} P_t \phi\|_{\infty} \leq c(t) \|\phi\|_{\infty}$ for all $t \in (0,T]$, $\phi \in \mc{B}_b(\mbb{R}^d)$ and $i \in \{1,\dots,d\}$, then $\|\partial_{x_i} \partial_{x_k} P_{2t} \phi\|_{\infty} \leq c(t)^2 \|\phi\|_{\infty}$ for all $t \in (0,T]$, $\phi \in \mc{B}_b(\mbb{R}^d)$ and $i,k \in \{1,\dots,d\}$.

	\item\label{p-0-iii}
        If $\int_{\mbb{R}^d} |\partial_{x_i} p_t(x)| \, dx \leq c(t)$ for all $t \in (0,T]$ and $i \in \{1,\dots,d\}$, then $\int_{\mbb{R}^d} |\partial_{x_i} \partial_{x_k} p_{2t}(x)| \, dx \leq c(t)^2$ for all $t \in (0,T]$ and $i,k \in \{1,\dots,d\}$.
	\end{enumerate}
\end{lemma}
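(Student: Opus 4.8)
The plan is to prove the three parts in turn; the whole lemma rests on the equivalence~\eqref{p-0-i}, and parts~\eqref{p-0-ii}, \eqref{p-0-iii} are then obtained by feeding the first-order estimate back into itself through the semigroup identity $p_{s+t}=p_s*p_t$, equivalently $P_{s+t}=P_sP_t$. Throughout I work with the two representations $P_t\phi(x)=\int_{\mbb{R}^d}\phi(x+y)\,p_t(y)\,dy=\int_{\mbb{R}^d}\phi(y)\,p_t(y-x)\,dy$.

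For ``$\Rightarrow$'' in~\eqref{p-0-i} I fix $\phi\in\mc{B}_b(\mbb{R}^d)$ and examine the difference quotient of $P_t\phi$ in direction $e_i$. Using $p_t(y-x-he_i)-p_t(y-x)=-\int_0^h(\partial_{x_i}p_t)(y-x-se_i)\,ds$ (legitimate since $p_t\in C^1$) and Fubini -- justified because $\int_{\mbb{R}^d}|(\partial_{x_i}p_t)(y-x-se_i)|\,dy=\int_{\mbb{R}^d}|\partial_{x_i}p_t|\,dy\le c(t)<\infty$ by translation invariance of Lebesgue measure -- the difference quotient equals $-\frac1h\int_0^h G(x+se_i)\,ds$, where $G(z):=\int\phi(y)(\partial_{x_i}p_t)(y-z)\,dy$. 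As the convolution of a bounded function with the $L^1$-function $\partial_{x_i}p_t$, the function $G$ is bounded and continuous with $\|G\|_\infty\le c(t)\|\phi\|_\infty$, so letting $h\to0$ shows that $\partial_{x_i}P_t\phi$ exists, equals $-G$, and satisfies the asserted gradient estimate. For ``$\Leftarrow$'' I instead take $\phi\in C_c^\infty(\mbb{R}^d)$, so that $\partial_{x_i}P_t\phi(x)=\int\partial_{x_i}\phi(z)\,p_t(z-x)\,dz$ may be computed classically; an integration by parts in $z_i$ (the boundary term vanishes because $\phi$ has compact support) gives $\partial_{x_i}P_t\phi(0)=-\int\phi(z)\,\partial_{x_i}p_t(z)\,dz$, and hence the hypothesis yields $\bigl|\int\phi(z)\,\partial_{x_i}p_t(z)\,dz\bigr|\le c(t)\|\phi\|_\infty$ for every such $\phi$. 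Taking the supremum over $\phi\in C_c^\infty(\mbb{R}^d)$ with $\|\phi\|_\infty\le1$ and using that $\partial_{x_i}p_t$ is continuous, hence locally integrable, gives $\int|\partial_{x_i}p_t|\le c(t)$ by duality. Finally, the identity $\nabla P_t\phi=P_t(\nabla\phi)$ for $\phi\in C_b^1(\mbb{R}^d)$ follows by comparing these two computations after approximating $\phi$ by $\phi\,\chi_R$ with smooth cut-off functions $\chi_R\uparrow1$, which is needed in order to discard the boundary term in the integration by parts since $p_t$ is not assumed to decay at infinity; this approximation is the one genuinely delicate point of part~\eqref{p-0-i}.

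For~\eqref{p-0-ii} I use the semigroup law $P_{2t}=P_tP_t$. By ``$\Leftarrow$'' of~\eqref{p-0-i} the hypothesis is equivalent to $\int|\partial_{x_i}p_t|\le c(t)$ for all $t\in(0,T]$, and the proof of ``$\Rightarrow$'' moreover shows that $P_t\psi\in C_b^1(\mbb{R}^d)$ with $\|\partial_{x_i}P_t\psi\|_\infty\le c(t)\|\psi\|_\infty$ for every $\psi\in\mc{B}_b(\mbb{R}^d)$ (with $\partial_{x_i}P_t\psi(x)=-\int\psi(y)(\partial_{x_i}p_t)(y-x)\,dy$, a continuous function of $x$). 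Applying the last assertion of~\eqref{p-0-i} to the $C_b^1$-function $P_t\phi$ gives $\partial_{x_k}P_{2t}\phi=\partial_{x_k}P_t(P_t\phi)=P_t(\partial_{x_k}P_t\phi)$, and since $\partial_{x_k}P_t\phi\in\mc{B}_b(\mbb{R}^d)$ with $\|\partial_{x_k}P_t\phi\|_\infty\le c(t)\|\phi\|_\infty$, one more application of the gradient estimate gives $\|\partial_{x_i}\partial_{x_k}P_{2t}\phi\|_\infty\le c(t)^2\|\phi\|_\infty$.

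Part~\eqref{p-0-iii} then combines the previous steps: the hypothesis together with~\eqref{p-0-i} gives the first-order estimate for $P_t$, part~\eqref{p-0-ii} upgrades it to $\|\partial_{x_i}\partial_{x_k}P_{2t}\phi\|_\infty\le c(t)^2\|\phi\|_\infty$, and the duality argument from ``$\Leftarrow$'' of~\eqref{p-0-i} -- now performing two integrations by parts in $\int\partial_{x_i}\partial_{x_k}\phi(z)\,p_{2t}(z)\,dz$ for $\phi\in C_c^\infty(\mbb{R}^d)$ and using $p_{2t}\in C^2$ -- converts this into $\int|\partial_{x_i}\partial_{x_k}p_{2t}|\le c(t)^2$. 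I do not expect any conceptual difficulty beyond part~\eqref{p-0-i}: the only real care needed is the bookkeeping of which function space each intermediate object lies in, so that the estimates may legitimately be iterated and the classical and distributional derivatives coincide; all integrals occurring are absolutely convergent thanks to $p_t\in C^2$ and the standing hypotheses.
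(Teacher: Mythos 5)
Your proof is correct and follows essentially the same route as the paper: part (i) is established by differentiating under the integral (you via FTC+Fubini+convolution continuity, the paper via its general differentiation lemma) and by dualizing the operator estimate back to an $L^1$ bound on $\partial_{x_i}p_t$ (you with $C_c^\infty$ test functions and one integration by parts, the paper with $\mc{B}_b$ functions and cut-offs $\chi_n$), and parts (ii)--(iii) follow identically from the semigroup identity $P_{2t}=P_tP_t$ together with $\nabla P_t\phi=P_t(\nabla\phi)$. The minor technical variations in how differentiation under the integral and the $L^1$-norm duality are justified do not change the argument's structure.
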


\begin{proof}
\firstpara{\ref{p-0-i}}
    Suppose that $\int_{\mbb{R}^d} |\partial_{x_i} p_t(x)| \, dx \leq c(t)$ for some $t \in (0,T]$.
    Since
    \begin{gather}\notag
		x \mapsto \int_{\mbb{R}^d} \phi(z) \partial_{x_i} p_t(z-x) \, dz 
	\intertext{is continuous -- this is readily seen with a change of variables $z \mapsto z+x$, dominated convergence, and the fact that $\phi$ is bounded and continuous -- it follows from the differentiation lemma, cf.\ Lemma~\ref{app-1}, that}
    \notag
		x \mapsto P_t \phi(x) = \mbb{E}\phi(x+L_t) = \int_{\mbb{R}^d} \phi(z) p_t(z-x) \, dz
	\intertext{is differentiable and}\label{p-eq6}
		\partial_{x_i} P_t \phi(x) = \int_{\mbb{R}^d} \phi(z) \partial_{x_i} p_t(z-x) \, dz;
	\intertext{thus}\tag{$\star$}\label{p-eq-star}
		\|\partial_{x_i} P_t \phi\|_{\infty} \leq c(t) \|\phi\|_{\infty}, \quad \phi \in \mc{B}_b(\mbb{R}^d).
	\end{gather}
    Suppose that \eqref{p-eq-star} holds. Let $\chi_n \in C_c(\mbb{R}^d)$ be a cut-off function such that $\I_{B(0,n)} \leq \chi_n \leq \I_{B(0,n+1)}$. Applying the differentiation lemma, we find that
    \begin{equation*}
        \partial_{x_i} P_t (\phi \chi_n)(x)
        = \int_{\mbb{R}^d} \phi(z) \chi_n(z) \partial_{x_i} p_t(z-x) \, dz
        =- \int_{\mbb{R}^d} \phi(x+y) \chi_n(x+y) \partial_{y_i} p_t(y) \, dy
	\end{equation*}
	for any $x \in \mbb{R}^d$ and $\phi \in \mc{B}_b(\mbb{R}^d)$. Thus,
    \begin{align*}
		\int_{|y| \leq n} |\partial_{y_i} p_t(y)| \, dy
        &= \sup \left\{ \left| \int_{|y| \leq n} \phi(y) \partial_{y_i} p_t(y) \, dy \right|;\; \phi \in \mc{B}_b(\mbb{R}^d),\; \|\phi\|_{\infty} \leq 1\right\} \\
        &= \sup \left\{ \left| \int_{|y| \leq n} \phi(y) \chi_n(y) \partial_{y_i} p_t(y) \, dy \right|;\; \phi \in \mc{B}_b(\mbb{R}^d),\; \|\phi\|_{\infty} \leq 1 \right\} \\
        &\leq \sup\left\{\|\partial_{x_i} P_t(\phi \chi_n)\|_{\infty};\; \phi \in \mc{B}_b(\mbb{R}^d),\; \|\phi\|_{\infty} \leq 1\right\}
        \leq c(t).
	\end{align*}
    As $n \in \mbb{N}$ is arbitrary, the monotone convergence theorem gives $\int_{\mbb{R}^d} |\partial_{y_i} p_t(y)| \, dy \leq c(t)$. Then \eqref{p-eq6} and the integration by parts formula show that $\nabla P_t \phi = P_t (\nabla \phi)$ for any $\phi \in C_b^1(\mbb{R}^d)$.

\para{\ref{p-0-ii}}
    Fix $\phi \in \mc{B}_b(\mbb{R}^d)$ and $t \in (0,T]$. Since $P_{t} \phi \in C_b^1(\mbb{R}^d)$ it follows from \ref{p-0-i} and the semigroup property that \begin{align*}
		\|\partial_{x_i} \partial_{x_k} P_{2t} \phi\|_{\infty}
		= \|\partial_{x_i} \partial_{x_k} P_{t} P_{t} \phi\|_{\infty}
		= \|\partial_{x_i} P_{t} (\partial_{x_k} P_{t} \phi)\|_{\infty}
		&\leq c(t) \|\partial_{x_k} P_{t} \phi\|_{\infty} \\
		&\leq c(t)^2 \|\phi\|_{\infty}.
	\end{align*}

\para{\ref{p-0-iii}}
    By \ref{p-0-i} and \ref{p-0-ii}, we have $\|\partial_{x_i} \partial_{x_k} P_{2t} \phi\|_{\infty} \leq c(t)^2 \|\phi\|_{\infty}$. Using a very similar reasoning as in the proof of \ref{p-0-i}, we find that
    \begin{equation*}
		\int_{|y| \leq n} |\partial_{y_i} \partial_{y_k} p_{2t}(y)| \, dy \leq c(t)^2;
	\end{equation*}
	applying the monotone convergence theorem completes the proof.
\end{proof}

Recall that we use for a function $g: [0,T] \times \mbb{R}^d \to \mbb{R}$ and $\beta \in (0,1]$ the notation
\begin{equation*}
    \|g\|_{\mc{C}_b^{\beta}(\mbb{R}^d)}
    := \sup_{t \in [0,T]} \sup_{x \in \mbb{R}^d} |g(t,x)| + \sup_{t \in [0,T]} \sup_{x \neq y} \frac{|g(t,x)-g(t,y)|}{|x-y|^{\beta}}.
\end{equation*}

\begin{lemma} \label{p-1}
    Let $(L_t)_{t \geq 0}$ be a L\'evy process as in Theorem~\ref{main1} with generator $(A,\mc{D}(A))$. Let $g \in C([0,T], \mc{C}_b^{\beta}(\mbb{R}^d))$ for $\beta \in (0,1]$ satisfying \eqref{bc}. For every $T>0$ there exists a mapping $u \in C([0,T],\mc{C}_b^{1, \max\{\beta, \,\gamma_0/2\}}(\mbb{R}^d)) \cap C^1([0,T],C_b(\mbb{R}^d))$ solving
    \begin{gather}\label{p-eq7}
		\frac{\partial}{\partial t} u(t,x) = A_x u(t,x) + g(t,x)
        \quad\text{on $[0,T] \times \mbb{R}^d$}
	\intertext{such that $u(0,\cdot)=0$ and}\label{p-eq9}
        \|u\|_{\infty} + \|\nabla u\|_{\mc{C}_b^{\beta}(\mbb{R}^d)} + \|\nabla u\|_{\mc{C}_b^{\gamma_0/2}(\mbb{R}^d)}
        \leq C_T \|g\|_{\mc{C}_b^{\beta}(\mbb{R}^d)}
	\end{gather}
	for some constant $C_T>0$ which does not depend on $g$ and satisfies $\lim_{T\to 0}C_T = 0$.
\end{lemma}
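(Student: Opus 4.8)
The plan is to construct $u$ by Duhamel's formula. I would set
\[
  u(t,x):=\int_0^t P_{t-s}\,g(s,\cdot)(x)\,ds,\qquad (t,x)\in[0,T]\times\mbb{R}^d,
\]
where $(P_t)_{t\ge0}$ is the transition semigroup of $(L_t)_{t\ge0}$. Then $u(0,\cdot)=0$ and $\|u\|_\infty\le T\|g\|_\infty$ are immediate. By Lemma~\ref{p-0}\ref{p-0-i}, condition \eqref{ihke} is equivalent to the gradient estimate $\|\nabla P_\tau\phi\|_\infty\le c\,\tau^{-1/\alpha}\|\phi\|_\infty$ for $\phi\in\mc{B}_b(\mbb{R}^d)$, and since $\alpha>1$ the function $\tau\mapsto\tau^{-1/\alpha}$ is integrable near $0$; hence, using the differentiation lemma (Lemma~\ref{app-1}) and the mean value theorem to dominate the difference quotients, one may differentiate under the integral sign to obtain $\nabla u(t,x)=\int_0^t\nabla P_{t-s}g(s,\cdot)(x)\,ds$ and
\[
  \|\nabla u\|_\infty\le c\,\frac{\alpha}{\alpha-1}\,T^{1-1/\alpha}\|g\|_\infty .
\]
The prefactor is a positive power of $T$; this feature recurs below and eventually gives $\lim_{T\to0}C(T)=0$.

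The heart of the proof — the step I expect to be the main obstacle — consists of the two H\"older estimates for $\nabla u(t,\cdot)$; write $[\,\cdot\,]_\theta$ for the $\theta$-H\"older seminorm, and abbreviate $\tau:=t-s$, $\phi:=g(s,\cdot)$. From \eqref{ihke}/Lemma~\ref{p-0}\ref{p-0-i} we have $\|\nabla P_\tau\phi\|_\infty\le c\,\tau^{-1/\alpha}\|\phi\|_\infty$, and Lemma~\ref{p-0}\ref{p-0-ii} gives $\|\nabla^2 P_\tau\phi\|_\infty\le C\,\tau^{-2/\alpha}\|\phi\|_\infty$ (note that $u(t,\cdot)$ is in general \emph{not} twice differentiable, since $\int_0^t(t-s)^{-2/\alpha}\,ds=\infty$, so the Hessian bound is used only inside the interpolation below). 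Interpolating these with $\|\phi\|_\infty$ via $[F]_\theta\lesssim\|F\|_\infty^{1-\theta}\|\nabla F\|_\infty^\theta$ would give $\nabla u(t,\cdot)\in\mc{C}_b^\theta$ only under $1+\theta<\alpha$, which is stronger than \eqref{bc} for $\theta=\gamma_0/2$; the remedy is to exploit the H\"older continuity of $\phi$. Since $\int_{\mbb{R}^d}\nabla p_\tau(w)\,dw=0$, subtracting the constant $\phi(x)$ gives $\nabla P_\tau\phi(x)=-\int_{\mbb{R}^d}\bigl(\phi(x+w)-\phi(x)\bigr)\nabla p_\tau(w)\,dw$, and likewise for $\nabla^2 P_\tau\phi$ (using Lemma~\ref{p-0}\ref{p-0-iii}); estimating $|\phi(x+w)-\phi(x)|\le 2\|\phi\|_{\mc{C}_b^\beta}\min\{|w|^\beta,1\}$ one is led to the weighted integrals $\int_{\mbb{R}^d}\min\{|w|^\beta,1\}\,|\nabla p_\tau(w)|\,dw$ and $\int_{\mbb{R}^d}\min\{|w|^\beta,1\}\,|\nabla^2 p_\tau(w)|\,dw$. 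Splitting the $w$-integral at $|w|\approx\tau^{1/\alpha}$ and at $|w|\approx1$ and combining \eqref{ihke}, Lemma~\ref{p-0} and the small-jump condition $\int_{|z|\le1}|z|^{\gamma_0}\,\nu(dz)<\infty$ (this is the delicate technical point) yields, for small $\tau$, the refined bounds $\|\nabla P_\tau\phi\|_\infty\lesssim\|\phi\|_{\mc{C}_b^\beta}\,\tau^{(\beta-1)/\alpha}$ and $\|\nabla^2 P_\tau\phi\|_\infty\lesssim\|\phi\|_{\mc{C}_b^\beta}\,\tau^{(\beta-2)/\alpha}$, while for $\tau$ bounded away from $0$ the original bounds suffice. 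Interpolating the refined bounds gives, for $\theta\in\{\beta,\gamma_0/2\}$, $[\nabla P_\tau\phi]_\theta\lesssim\|\phi\|_{\mc{C}_b^\beta}\,\tau^{(\beta-1-\theta)/\alpha}$, so that
\[
  [\nabla u(t,\cdot)]_\theta\le C\,\|g\|_{\mc{C}_b^\beta}\int_0^t (t-s)^{(\beta-1-\theta)/\alpha}\,ds,
\]
which is finite — and equals a positive power of $t$ times $C\|g\|_{\mc{C}_b^\beta}$ — precisely when $\alpha+\beta-1>\theta$. Since $\gamma_0\le2$, the balance condition \eqref{bc} implies $2(\alpha+\beta-1)>\gamma_0$, i.e.\ $\alpha+\beta-1>\gamma_0/2$, and $\alpha+\beta-1>\beta$ is automatic from $\alpha>1$; hence both H\"older estimates hold. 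Together with the bound on $\|\nabla u\|_\infty$ above and the continuity of $t\mapsto u(t,\cdot)$ in these norms (dominated convergence in the integral representation), this establishes $u\in C([0,T],\mc{C}_b^{1,\max\{\beta,\gamma_0/2\}}(\mbb{R}^d))$ and the estimate \eqref{p-eq9}.

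It remains to identify $\partial_t u$. Since $\gamma_0\in[1,2]$ we have $\gamma_0/2\ge\gamma_0-1$, so $u(t,\cdot)\in\mc{C}_b^{1,\gamma_0-1}(\mbb{R}^d)\subseteq\mc{D}(A)$ with $\|A_x u(t,\cdot)\|_\infty\le C\|u(t,\cdot)\|_{\mc{C}_b^{1,\gamma_0-1}(\mbb{R}^d)}$ by \eqref{pre-eq6}. As $s\mapsto\|A_x P_{t-s}g(s,\cdot)\|_\infty$ blows up like $(t-s)^{-2/\alpha}$ at the diagonal, I would not differentiate under the integral sign, but instead use the semigroup property: for $0\le t'<t\le T$,
\[
  u(t)=P_{t-t'}\,u(t')+\int_{t'}^t P_{t-s}\,g(s,\cdot)\,ds .
\]
Dividing by $t-t'$, the term $\tfrac{1}{t-t'}\bigl(P_{t-t'}u(t')-u(t')\bigr)$ converges in $C_b(\mbb{R}^d)$ to $A_x u(t',\cdot)$ as $t\downarrow t'$ because $u(t',\cdot)\in\mc{D}(A)$, while $\tfrac{1}{t-t'}\int_{t'}^t P_{t-s}g(s,\cdot)\,ds\to g(t',\cdot)$, using the continuity of $t\mapsto g(t,\cdot)$ and the fact that $\|P_\tau\psi-\psi\|_\infty\to0$ for every bounded, uniformly continuous $\psi$; the left derivative is handled analogously. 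Hence $u\in C^1([0,T],C_b(\mbb{R}^d))$ with $\partial_t u(t,x)=A_x u(t,x)+g(t,x)$ on $[0,T]\times\mbb{R}^d$, i.e.\ \eqref{p-eq7} holds, and $t\mapsto\partial_t u(t,\cdot)$ is continuous in $C_b(\mbb{R}^d)$ because $t\mapsto g(t,\cdot)$ is continuous, $t\mapsto u(t,\cdot)$ is continuous in $\mc{C}_b^{1,\gamma_0-1}(\mbb{R}^d)$, and $A$ is bounded from $\mc{C}_b^{1,\gamma_0-1}(\mbb{R}^d)$ into $C_b(\mbb{R}^d)$. Finally, every constant occurring above is a positive power of $T$ times a quantity independent of $g$, which yields \eqref{p-eq9} with $\lim_{T\to0}C(T)=0$.
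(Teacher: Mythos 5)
The overall strategy matches the paper: Duhamel's formula, the $L^1$ gradient estimate \eqref{ihke}, integrability of $\tau^{-1/\alpha}$ because $\alpha>1$, and H\"older estimates for $\nabla u$ giving \eqref{p-eq9} with a positive power of $T$. The gap is in your derivation of the refined bounds
\[
\|\nabla P_\tau\phi\|_\infty\lesssim\|\phi\|_{\mc{C}_b^\beta}\,\tau^{(\beta-1)/\alpha},
\qquad
\|\nabla^2 P_\tau\phi\|_\infty\lesssim\|\phi\|_{\mc{C}_b^\beta}\,\tau^{(\beta-2)/\alpha}.
\]
You propose to get these by writing $\nabla P_\tau\phi(x)=-\int(\phi(x+w)-\phi(x))\nabla p_\tau(w)\,dw$, bounding $|\phi(x+w)-\phi(x)|\le 2\|\phi\|_{\mc{C}_b^\beta}\min\{|w|^\beta,1\}$, and then ``splitting the $w$-integral at $|w|\approx\tau^{1/\alpha}$''. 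The hypothesis \eqref{ihke} is only an $L^1$ bound on $\nabla p_\tau$; it gives no pointwise decay of $\nabla p_\tau(w)$ in $w$, so the outer part of the split, $\int_{|w|>\tau^{1/\alpha}}\min\{|w|^\beta,1\}\,|\nabla p_\tau(w)|\,dw$, can only be estimated by $c\,\tau^{-1/\alpha}$, which destroys the gain of $\tau^{\beta/\alpha}$. The invocation of the small-jump condition $\int_{|z|\le1}|z|^{\gamma_0}\,\nu(dz)<\infty$ here is also spurious -- that condition concerns the L\'evy measure, not the transition density, and plays no role in these kernel estimates.

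The remedy is precisely the interpolation argument the paper uses, and it is worth noting that your target bounds are in fact correct and reachable by this route. Write $F_\tau=\nabla P_\tau$, observe $\|F_\tau\|_{C_b\to C_b}\le c\,\tau^{-1/\alpha}$ (from \eqref{ihke}) and, crucially, $\|F_\tau\|_{C_b^1\to C_b}\le 1$ via $\nabla P_\tau\phi=P_\tau(\nabla\phi)$ and $\|P_\tau\|_{C_b\to C_b}\le1$ -- this bounded, non-singular second leg replaces what your ``splitting'' was supposed to supply. Interpolating over $\phi\in\mc{C}_b^\beta=(C_b,C_b^1)_{\beta,\infty}$ gives exactly $\|\nabla P_\tau\phi\|_\infty\lesssim\tau^{(\beta-1)/\alpha}\|\phi\|_{\mc{C}_b^\beta}$, and the Hessian bound follows from $\|\nabla^2 P_\tau\|_{C_b\to C_b}\le c\,\tau^{-2/\alpha}$ (Lemma~\ref{p-0}) and $\|\nabla^2 P_\tau\|_{C_b^1\to C_b}\le c\,\tau^{-1/\alpha}$. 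The paper interpolates in a slightly different order (target first, then source) and obtains $\kappa=(2+\gamma_0(1-\beta))/(2\alpha)$, whereas your scheme gives the marginally smaller exponent $(2+\gamma_0-2\beta)/(2\alpha)$; both are $<1$ under \eqref{bc}, so either works.

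Your treatment of \eqref{p-eq7} via the flow identity $u(t)=P_{t-t'}u(t')+\int_{t'}^tP_{t-s}g(s,\cdot)\,ds$ is a clean alternative to the paper's approach (which mollifies $g$ by a cutoff and passes to the limit). But be careful with ``the term converges in $C_b(\mbb{R}^d)$ to $A_xu(t',\cdot)$ because $u(t',\cdot)\in\mc{D}(A)$'': the Feller generator is defined on $C_\infty(\mbb{R}^d)$, and $u(t',\cdot)$ need not vanish at infinity. The inclusion $\mc{C}_b^{1,\gamma_0-1}\subseteq\mc{D}(A)$ is for an extended pointwise generator, and one must still verify a Dynkin-type identity $\frac{1}{\tau}(P_\tau f-f)\to Af$ uniformly for such $f$. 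This can be done (since $Af$ is then bounded and uniformly continuous), but it is exactly the step the paper's cutoff-and-limit argument is designed to avoid; as written, your argument quietly asserts it.
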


\begin{remark} \label{p-3}
\firstpara{(i)}
    If we define $v(t,x) := u(T-t,x)$ for fixed $T>0$, then $v$ is a solution to the equation with reversed time
    \begin{equation*}
		-\frac{\partial}{\partial t} v(t,x) = A_x v(t,x)+g(t,x), \quad v(T,\cdot)=0.
	\end{equation*}

\para{(ii)}
    In Theorem~\ref{main1} (and hence in Lemma~\ref{p-1}) we assume that the constant $\alpha$ appearing in \eqref{ihke} is strictly larger than $1$ and we require that the balance condition \eqref{bc} is satisfied. Both assumptions are crucial for the proof of Lemma~\ref{p-1}. The assumption $\alpha>1$ is needed to prove that $\nabla u(t,\cdot)$ exists and $\nabla u(t,\cdot) \in \mc{C}^{\beta}_b(\mbb{R}^d)$ whereas \eqref{bc} is used to show that $\nabla u(t,\cdot) \in \mc{C}^{\gamma_0/2}_b(\mbb{R}^d)$. The fact that $\nabla u(t,\cdot) \in \mc{C}^{\beta}_b(\mbb{R}^d)$ will be needed to construct a solution to \eqref{p-eq5} using Picard iterations, see Theorem~\ref{p-5}, and $\nabla u(t,\cdot) \in \mc{C}^{\gamma_0/2}_b(\mbb{R}^d)$ will be used when we apply It\^o's formula, see Proposition~\ref{app-3} and the proof of Theorem~\ref{main1}. Note that $\nabla u(t,\cdot) \in \mc{C}^{\gamma_0/2}_b(\mbb{R}^d) \subseteq \mc{C}^{\gamma_0-1}_b(\mbb{R}^d)$ implies, in particular, that $u(t,\cdot) \in \mc{D}(A)$ for all $t \in [0,T]$, see the remark following \eqref{pre-eq5}.
\end{remark}

\begin{proof}[Proof of Lemma~\ref{p-1}]
	We claim that
    \begin{equation*}
		u(t,x) := \int_{(0,t)} \mbb{E} g(s,x+L_{t-s}) \, ds, \quad t \in [0,T],\; x \in \mbb{R}^d,
	\end{equation*}
	has all the desired properties.

\medskip\noindent
    \textbf{Step 1: $u$ satisfies \eqref{p-eq9}.} For fixed $i \in \{1,\dots,d\}$ and $t \in (0,T]$ define
    \begin{equation*}
        F_t \phi(x)
        := \int_{\mbb{R}^d} \phi(x+y) \partial_{y_i} p_t(y) \, dy
        = \int_{\mbb{R}^d} \phi(y) \partial_{y_i} p_t(y-x) \, dy,
        \quad \phi \in C_b(\mbb{R}^d),\; x \in \mbb{R}^d.
	\end{equation*}
	By Lemma~\ref{p-0},
    \begin{align*}
		\frac{\partial}{\partial x_i} \mbb{E} \phi(x+L_t)
		&= \frac{\partial}{\partial x_i} \int \phi(y) p_t(y-x) \, dy
		= - F_t \phi(x)
	\end{align*}
	and because of \eqref{ihke} we know that
    \begin{equation}\label{p-eq10}
		\|F_t \phi\|_{\infty} \leq \|\phi\|_{\infty} \int_{\mbb{R}^d} |\partial_{y_i} p_t(y)| \, dy \leq c \|\phi\|_{\infty} t^{-1/\alpha}
	\end{equation}
    as well as
    \begin{equation*}
		|F_t \phi(x)-F_t \phi(z)|
		= \left| \int_{\mbb{R}^d} (\phi(x+y)-\phi(z+y)) \partial_{y_i} p_t(y) \, dy \right|
		\leq c \|\phi\|_{\mc{C}_b^{\beta}(\mbb{R}^d)} |x-z|^{\beta} t^{-1/\alpha}
	\end{equation*}
	for some absolute constant $c>0$; therefore,
    \begin{align*}
        \|\partial_{x_i} \mbb{E}\phi(\cdot+L_t)\|_{\mc{C}_b^{\beta}(\mbb{R}^d)}
        = \|F_t \phi\|_{\mc{C}_b^{\beta}(\mbb{R}^d)}
        \leq 2c t^{-1/\alpha} \|\phi\|_{\mc{C}_b^{\beta}(\mbb{R}^d)} \fa \phi \in \mc{C}_b^{\beta}(\mbb{R}^d).
	\end{align*}
    Applying this to $\phi(y) := g(s,y)$ (with $s \in (0,t)$ fixed) it follows from the differentiation lemma and \eqref{ihke} that $\partial_{x_i} u(t,x)$ exists for all $t \in (0,T]$ and
    \begin{gather*}
		\partial_{x_i} u(t,x) = \int_{(0,t)} \partial_{x_i} \mbb{E}g(s,x+L_{t-s}) \, ds
	\intertext{satisfies}
        \|\partial_{x_i} u\|_{\mc{C}_b^{\beta}(\mbb{R}^d)}
        \leq 2c \|g\|_{\mc{C}_b^{\beta}(\mbb{R}^d)} \int_0^T (T-s)^{-1/\alpha} \, ds
        =: C_1 T^{1-1/\alpha} \|g\|_{\mc{C}_b^{\beta}(\mbb{R}^d)};
	\end{gather*}
    (recall that, by assumption, $\alpha>1$). In order to prove $\|\partial_{x_i} u\|_{\mc{C}_b^{\gamma_0/2}(\mbb{R}^d)} \leq C_{2,T} \|g\|_{\mc{C}_b^{\beta}(\mbb{R}^d)}$ we show that
    \begin{equation}\label{p-eq11}
        \|F_t \phi\|_{\mc{C}_b^{\gamma_0/2}(\mbb{R}^d)}
        \leq c' \|\phi\|_{\mc{C}_b^{\beta}(\mbb{R}^d)} t^{-(2+\gamma_0 (1-\beta))/\alpha},
        \quad \phi \in \mc{C}_b^{\beta}(\mbb{R}^d),
	\end{equation}
	for some absolute constant $c'>0$. By the differentiation lemma, we have
    \begin{equation*}
		\partial_{x_k} F_t \phi(x) = \int_{\mbb{R}^d} \phi(y) \partial_{y_k} \partial_{y_i} p_t(y-x) \, dy
	\end{equation*}
	for all $\phi \in C_b(\mbb{R}^d)$, and so, by \eqref{ihke} and Lemma~\ref{p-0}\ref{p-0-iii},
    \begin{equation}\label{p-eq13}
		\|\partial_{x_k} F_t \phi\|_{\infty} \leq c t^{-2/\alpha} \|\phi\|_{\infty} \fa k=1,\dots,d, \; \phi \in C_b(\mbb{R}^d).
	\end{equation}
    If $\gamma_0 \in [1,2)$, then we can use real interpolation to get  $\mc{C}_b^{\gamma_0/2}(\mbb{R}^d) = (C_b(\mbb{R}^d),C_b^1(\mbb{R}^d))_{\gamma_0/2,\infty}$, cf.\ Triebel \cite[Section 2.7.2]{triebel} or Lunardi \cite[Example 1.8]{lunardi}. From \eqref{p-eq10}, \eqref{p-eq13} and the interpolation theorem, see e.g.\ \cite[Section 1.3.3]{triebel} or \cite[Theorem 1.6]{lunardi}, it follows that
    \begin{equation}\label{p-eq15}
		\|F_t \phi\|_{\mc{C}_b^{\gamma_0/2}(\mbb{R}^d)}
		\leq \|F_t \phi\|_{C_b(\mbb{R}^d)}^{1-\gamma_0/2} \|F_t \phi\|_{C_b^1(\mbb{R}^d)}^{\gamma_0/2}
		\leq c \|\phi\|_{\infty} t^{-(1+\gamma_0/2)/\alpha}, \quad \phi \in C_b(\mbb{R}^d).
	\end{equation}
    If $\gamma_0 = 2$, then \eqref{p-eq15} is a direct consequence of \eqref{p-eq13}. On the other hand, another application of the differentiation lemma shows that for any $\phi \in C_b^1(\mbb{R}^d)$
    \begin{gather*}
		\partial_{x_k} F_t \phi(x) = \int_{\mbb{R}^d} \partial_{x_k} \phi(x+y) \partial_{y_i} p_t(y) \, dy
	\intertext{implying}
		\|\partial_{x_k} F_t \phi\|_{\infty} \leq c \|\phi\|_{C_b^1(\mbb{R}^d)} t^{-1/\alpha}, \quad \phi \in C_b^1(\mbb{R}^d).
	\end{gather*}
	Thus,
    \begin{equation} \label{p-eq17}
		\|F_t \phi\|_{\mc{C}_b^{\gamma_0/2}(\mbb{R}^d)}
        \leq c \|\phi\|_{C_b^1(\mbb{R}^d)} t^{-1/\alpha},
        \quad \phi \in C_b^1(\mbb{R}^d).
	\end{equation}
    Using \eqref{p-eq15} and \eqref{p-eq17} we can apply the interpolation theorem once more to find that $F_t$ maps $\mc{C}_b^{\beta}(\mbb{R}^d) = (C_b(\mbb{R}^d),C_b^1(\mbb{R}^d))_{\beta,\infty}$ into $\mc{C}_b^{\gamma_0/2}(\mbb{R}^d)$ and
    \begin{equation*}	
        \|F_t \phi\|_{\mc{C}_b^{\gamma_0/2}(\mbb{R}^d)}
        \leq c \|\phi\|_{\mc{C}_b^{\beta}(\mbb{R}^d)} t^{-(1-\beta)(1+\gamma_0/2)/\alpha - \beta/\alpha}
        = c \|\phi\|_{\mc{C}_b^{\beta}(\mbb{R}^d)} t^{-\kappa}.
	\end{equation*}
    for $\kappa := (2+\gamma_0(1-\beta))/(2\alpha)$; note that $\kappa<1$ because of the balance condition \eqref{bc}. Applying the estimate to $\phi(y) := g(s,y)$, we conclude that
    \begin{equation*}
		\|\partial_{x_i} u\|_{\mc{C}_b^{\gamma_0/2}(\mbb{R}^d)}
		\leq c \|g\|_{\mc{C}_b^{\beta}(\mbb{R}^d)} \int_{0}^T (T-s)^{-\kappa} \, ds
        =: C_2 T^{1-\kappa} \|g\|_{\mc{C}_b^{\beta}(\mbb{R}^d)}.
	\end{equation*}
	
\medskip\noindent
    \textbf{Step 2: $u$ solves \eqref{p-eq7}.}
    By \cite[Theorem 4.1(iii)]{ihke}, we have \begin{equation*} 
    	\mc{C}_{\infty}^{1,\gamma_0-1}(\mbb{R}^d) := \{f \in \mc{C}_b^{1,\gamma_0-1}(\mbb{R}^d); f \in C_{\infty}(\mbb{R}^d), \forall j=1,\ldots,d: \partial_{x_j} f \in C_{\infty}(\mbb{R}^d)\} \subseteq \mc{D}(A);
    \end{equation*}
    recall that $C_{\infty}(\mbb{R}^d)$ is the space of continuous functions vanishing as $|x| \to \infty$. It follows from the proof of Step 1 that \begin{equation*}
        x \mapsto P_{\eps} \phi(x)
        := \mbb{E} \phi(x+L_{\eps}) \in \mc{C}_{\infty}^{1,\gamma_0/2}(\mbb{R}^d)
        \subseteq \mc{C}_{\infty}^{1,\gamma_0-1}(\mbb{R}^d) \subseteq \mc{D}(A)
	\end{equation*}
	for all $\eps>0$ and $\phi \in \mc{C}_b^{\beta}(\mbb{R}^d) \cap C_{\infty}(\mbb{R}^d)$. Since
    \begin{gather*}
		\frac{d}{dt} P_t f = A P_t f \fa f \in \mc{D}(A)
	\intertext{we find}
		\frac{d}{dt} P_{t+\eps} \phi = A P_{t+\eps} \phi
	\intertext{for all $t \geq 0$, $\eps>0$ and $\phi \in \mc{C}_b^{\beta}(\mbb{R}^d) \cap C_{\infty}(\mbb{R}^d)$ which means that}
		\frac{d}{d\tau} P_{\tau} \phi = A P_{\tau} \phi, \quad \tau>0,\; \phi \in \mc{C}_b^{\beta}(\mbb{R}^d) \cap C_{\infty}(\mbb{R}^d).
	\end{gather*}
    Applying this identity to $\phi(x) := g(s,x)$ with $g \in C([0,T],\mc{C}_b^{\beta}(\mbb{R}^d))$ such that $g(s,\cdot) \in C_{\infty}(\mbb{R}^d)$ for all $s \in [0,T]$ shows that $u(t,x) = \int_{(0,t)} \mbb{E}g(s,x+L_{t-s}) \, ds$ is a function in $C^1([0,T],C_b(\mbb{R}^d))$ which solves \eqref{p-eq7}, see \cite[Lemma 7]{proske} for details.

    For an arbitrary $g \in C([0,T],\mc{C}_b^{\beta}(\mbb{R}^d))$ fix a cut-off function $\chi \in C_c^{\infty}(\mbb{R}^d)$ such that $\I_{B(0,1)} \leq \chi \leq \I_{B(0,2)}$ and set $g_n(t,x) := g(t,x) \chi(x/n)$ for $t \in [0,T]$, $x \in \mbb{R}^d$ and $n \in \mbb{N}$. Since $g_n(t,\cdot)$ vanishes at infinity, it follows from the first part that $u_n(t,x) := \int_{(0,t)} \mbb{E}g_n(s,x+L_{t-s}) \, ds$ satisfies
    \begin{gather}\notag
		\frac{\partial}{\partial t} u_n(t,x) = A_x u_n(t,x) + g_n(t,x), \quad u_n(0,x)=0
	\intertext{i.e.}\label{p-eq19}
		u_n(t,x) = \int_{(0,t)} (A_x u_n(s,x)+g_n(s,x)) \, ds.
	\end{gather}
	We are going to show that we can let $n \to \infty$ using the dominated convergence theorem. For any $R>0$ and $t\in [0,T]$ we have
    \begin{equation*}
        |u_n(t,x)-u(t,x)|
        \leq \sup_{s \in [0,T]} \sup_{|y+x| \leq R} |g_n(s,y)-g(s,y)| + 2T \|g\|_{\infty} \mbb{P} \left( \sup_{0 \leq s \leq T} |L_s+x| > R \right)
	\end{equation*}
	and, by Step 1,
    \begin{align*}
		|\partial_{x_i} u_n(t,x)-\partial_{x_i} u(t,x)|
		&\leq \int_0^t\!\! \int_{|y| \leq R} |g_n(s,y)-g(s,y)| \cdot |\partial_{y_i} p_{t-s}(y-x)|  \, dy \, ds \\
		&\quad + 2\|g\|_{\infty} \int_0^t \!\! \int_{|y|>R} |\partial_{y_i} p_{t-s}(y-x)| \, dy \, ds.
	\end{align*}
    Therefore, we can combine the dominated convergence theorem, \eqref{ihke} and the fact that $g_n \to g$ converges uniformly on compact sets to see that \begin{equation}\label{p-eq20}
		\sup_{t \in [0,T]} |u(t,x)-u_n(t,x)| + \sup_{t \in [0,T]} |\nabla_x u(t,x)-\nabla_x u_n(t,x)| \xrightarrow[]{n \to \infty} 0
	\end{equation}
	for all $x \in \mbb{R}^d$. 	Moreover, by Step 1, \begin{align*}
		\|u_n\|_{\mc{C}_b^{1,\gamma_0-1}(\mbb{R}^d)} + \|u\|_{\mc{C}_b^{1,\gamma_0-1}(\mbb{R}^d)}
		&\leq \|u_n\|_{\mc{C}_b^{1,\gamma_0/2}(\mbb{R}^d)} + \|u\|_{\mc{C}_b^{1,\gamma_0/2}(\mbb{R}^d)}\\
		&\leq C \|g\|_{\mc{C}_b^{\beta}(\mbb{R}^d)} + C \|g_n\|_{\mc{C}_b^{\beta}(\mbb{R}^d)}
		\leq 2C \|g\|_{\mc{C}_b^{\beta}(\mbb{R}^d)}.
	\end{align*}
	Observe that we have $\int_{|y| \leq 1} |y|^{\gamma_0} \, \nu(dy)<\infty$ and for any function $f \in \mc{C}_b^{1,\gamma_0-1}(\mbb{R}^d)$
    \begin{align*}
		|f(x+y)-f(x)| \leq 2 \|f\|_{\infty}
        \quad\text{and}\quad
        |f(x+y)-f(x)-\nabla f(x) \cdot y| \leq \|f\|_{\mc{C}_b^{1,\gamma_0-1}(\mbb{R}^d)} |y|^{\gamma_0}.
	\end{align*}
	Therefore, we can use dominated convergence and \eqref{p-eq20} to infer that
    \begin{equation*}
		\sup_{t \in [0,T]} |A_x u_n(t,x)+A_x u(t,x)| \xrightarrow[]{n \to \infty} 0 \fa x \in \mbb{R}^d.
	\end{equation*}
	Letting $n \to \infty$ in \eqref{p-eq19}, we finally get
    \begin{align*}
		u(t,x) = \int_{(0,t)} (A_x u(s,x)+g(s,x)) \, ds, \quad t \in [0,T],\; x \in \mbb{R}^d.
	\end{align*}
	This shows that $u \in C^1([0,T],C_b(\mbb{R}^d))$ solves \eqref{p-eq7}.
\end{proof}

\begin{theorem} \label{p-5}
    Let $(L_t)_{t \geq 0}$ be a $d$-dimensional L\'evy process as in Theorem~\ref{main1} with infinitesimal generator $(A,\mc{D}(A))$, and $b,g \in C([0,T],\mc{C}_b^{\beta}(\mbb{R}^d))$ for $\beta \in (0,1]$ satisfying \eqref{bc}. For sufficiently small $T>0$ there exists a map $u \in C([0,T],\mc{C}_b^{1, \max\{\beta,\, \gamma_0/2\}}(\mbb{R}^d)) \cap C^1([0,T], C_b(\mbb{R}^d))$ solving the equation
    \begin{gather}\label{p-eq21}
    \begin{aligned}
		\frac{\partial}{\partial t} u(t,x) + A_x u(t,x) + b(t,x) \cdot \nabla_x u(t,x) &= -g(t,x) &&\text{on $[0,T) \times \mbb{R}^d$}
        \\ u(T,\cdot) &= 0.
    \end{aligned}
	\intertext{Moreover, $u$ satisfies}\label{p-eq23}
        \|u\|_{\infty} + \|\nabla_x u\|_{\mc{C}_b^{\beta}(\mbb{R}^d)} + \|\nabla_x u\|_{\mc{C}_b^{\gamma_0/2}(\mbb{R}^d)} \leq c(T) \|g\|_{\mc{C}_b^{\beta}(\mbb{R}^d)}
	\end{gather}
	for some constant $c(T)>0$ which does not depend on $b$, $g$ and $c(T) \to 0$ as $T \to 0$.
\end{theorem}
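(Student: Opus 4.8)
The plan is to realise $u$ as a fixed point of a Picard-type map assembled from the linear solution operator of Lemma~\ref{p-1}. By the time reversal recorded in Remark~\ref{p-3}(i) it is enough to solve the forward problem
\begin{equation*}
	\partial_t u(t,x) = A_x u(t,x) + b(t,x)\cdot\nabla_x u(t,x) + g(t,x)
	\quad\text{on } [0,T]\times\mbb{R}^d, \qquad u(0,\cdot)=0,
\end{equation*}
after replacing $b,g$ by $(t,x)\mapsto b(T-t,x)$ and $(t,x)\mapsto g(T-t,x)$, which leaves all relevant norms unchanged. I would work in the Banach space $\mc{X} := C([0,T],\mc{C}_b^{1,\beta}(\mbb{R}^d))$ with norm $\|w\|_{\mc{X}} := \|w\|_{\infty} + \|\nabla_x w\|_{\mc{C}_b^{\beta}(\mbb{R}^d)}$, and define $\Phi(w) := u_w$, where $u_w$ is the solution of $\partial_t u_w = A_x u_w + (g + b\cdot\nabla_x w)$, $u_w(0,\cdot)=0$, furnished by Lemma~\ref{p-1}.

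First I would check that $\Phi$ maps $\mc{X}$ into itself. Since $\mc{C}_b^{\beta}(\mbb{R}^d)$ is a Banach algebra up to a universal constant, $\|f h\|_{\mc{C}_b^{\beta}} \leq 2\|f\|_{\mc{C}_b^{\beta}}\|h\|_{\mc{C}_b^{\beta}}$, the inhomogeneity satisfies $g + b\cdot\nabla_x w \in C([0,T],\mc{C}_b^{\beta}(\mbb{R}^d))$ with $\|b\cdot\nabla_x w\|_{\mc{C}_b^{\beta}} \leq 2\|b\|_{\mc{C}_b^{\beta}}\|w\|_{\mc{X}}$; here continuity in $t$ follows from the bilinearity of the product together with the continuity of $t\mapsto b(t,\cdot)$ and $t\mapsto \nabla_x w(t,\cdot)$ in $\mc{C}_b^{\beta}(\mbb{R}^d)$. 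Lemma~\ref{p-1} then yields
\begin{equation*}
	\Phi(w) = u_w \in C([0,T],\mc{C}_b^{1,\max\{\beta,\,\gamma_0/2\}}(\mbb{R}^d)) \cap C^1([0,T],C_b(\mbb{R}^d)) \subseteq \mc{X}
\end{equation*}
together with the estimate
\begin{equation*}
	\|u_w\|_{\infty} + \|\nabla_x u_w\|_{\mc{C}_b^{\beta}} + \|\nabla_x u_w\|_{\mc{C}_b^{\gamma_0/2}} \leq C(T)\big(\|g\|_{\mc{C}_b^{\beta}} + 2\|b\|_{\mc{C}_b^{\beta}}\|w\|_{\mc{X}}\big),
\end{equation*}
where $C(T)\to 0$ as $T\to 0$.

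Next I would establish that $\Phi$ is a contraction for $T$ small. For $w_1,w_2\in\mc{X}$ the difference $\Phi(w_1)-\Phi(w_2)$ solves the linear equation with inhomogeneity $b\cdot\nabla_x(w_1-w_2)$ and vanishing initial datum, so \eqref{p-eq9} gives
\begin{equation*}
	\|\Phi(w_1)-\Phi(w_2)\|_{\mc{X}} \leq C(T)\|b\cdot\nabla_x(w_1-w_2)\|_{\mc{C}_b^{\beta}} \leq 2C(T)\|b\|_{\mc{C}_b^{\beta}}\,\|w_1-w_2\|_{\mc{X}}.
\end{equation*}
Fixing $T_0>0$ so small that $2C(T)\|b\|_{\mc{C}_b^{\beta}} \leq \tfrac12$ for all $0<T\leq T_0$, Banach's fixed point theorem provides a unique $u\in\mc{X}$ with $u=\Phi(u)$; undoing the time reversal, $u$ solves \eqref{p-eq21}. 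Because $u=\Phi(u)$ it inherits the regularity of the output of Lemma~\ref{p-1}, so $u\in C([0,T],\mc{C}_b^{1,\max\{\beta,\,\gamma_0/2\}}(\mbb{R}^d)) \cap C^1([0,T],C_b(\mbb{R}^d))$. Finally, applying the a priori bound above with $w=u$,
\begin{equation*}
	\|u\|_{\infty} + \|\nabla_x u\|_{\mc{C}_b^{\beta}} + \|\nabla_x u\|_{\mc{C}_b^{\gamma_0/2}} \leq C(T)\|g\|_{\mc{C}_b^{\beta}} + \tfrac12\|\nabla_x u\|_{\mc{C}_b^{\beta}},
\end{equation*}
and absorbing the last term on the left yields \eqref{p-eq23} with $c(T):=2C(T)\to 0$ as $T\to 0$.

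The one point that needs care is the bookkeeping of the two Hölder exponents: the Picard iteration has to be run in the weaker space $\mc{C}_b^{1,\beta}$, where the product estimate and hence the contraction are available (the product $b\cdot\nabla_x w$ can be no smoother than $b$ itself), whereas the stronger regularity $\mc{C}_b^{1,\gamma_0/2}$ and the sharp bound \eqref{p-eq23} are recovered \emph{a posteriori} from a single application of Lemma~\ref{p-1} to the fixed point $u$. One also has to check that the smallness of $T$ can be chosen consistently so that $\Phi$ both contracts and permits the absorption step; since $C(T)\to 0$, the single threshold $T_0$ works for both, so I do not expect any genuine obstacle here beyond the linear theory already contained in Lemma~\ref{p-1}.
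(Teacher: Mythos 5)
Your proof is correct and follows essentially the same route as the paper's: a Picard/contraction argument built on Lemma~\ref{p-1}, with the smallness of $C(T)\|b\|_{\mc{C}_b^{\beta}}$ providing the contraction and the extra $\mc{C}_b^{\gamma_0/2}$ regularity of $\nabla_x u$ coming from one more application of the linear estimate. The only cosmetic difference is that you invoke Banach's fixed-point theorem in the weaker space $C([0,T],\mc{C}_b^{1,\beta})$ and then bootstrap, whereas the paper writes out the iterates and sums the telescoping differences directly in $C([0,T],\mc{C}_b^{1,\max\{\beta,\gamma_0/2\}})$, still using the $\mc{C}_b^{\beta}$-norm for the geometric decay.
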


\begin{proof}
    We use Picard iteration to prove the existence of the solution. Choose $T>0$ so small that $2C_T \|b\|_{\mc{C}_b^{\beta}(\mbb{R}^d)} \leq 1/2$ where $C_T$ is the constant appearing in \eqref{p-eq9}, and set $u^{(0)}:=0$. By Lemma~\ref{p-1} and Remark~\ref{p-3}(i) we can define iteratively $u^{(n+1)} \in C([0,T],\mc{C}_b^{\max\{\beta,\, \gamma_0/2\}}(\mbb{R}^d)) \cap C^1([0,T],C_b(\mbb{R}^d))$ such that \eqref{p-eq9} holds and
    \begin{equation*}
        \frac{\partial}{\partial t} u^{(n+1)}(t,x) + A_x u^{(n+1)}(t,x) = - b(t,x) \cdot \nabla_x u^{(n)}(t,x) - g(t,x),
        \quad u^{(n+1)}(T,\cdot)=0.
	\end{equation*}
	Using repeatedly \eqref{p-eq9} we find
    \begin{align*}
		\|u^{(n+1)}-u^{(n)}\|_{\mc{C}_b^{1,\max\{\beta,\, \gamma_0/2\}}(\mbb{R}^d)}
		&\leq C_T \|b \cdot \nabla_x u^{(n)}- b \cdot \nabla_x u^{(n-1)}\|_{\mc{C}_b^{\beta}(\mbb{R}^d)} \\
		&\leq 2 C_T \|b\|_{\mc{C}_b^{\beta}(\mbb{R}^d)} \|\nabla_x u^{(n)}-\nabla_x u^{(n-1)}\|_{\mc{C}_b^{\beta}(\mbb{R}^d)} \\
		&\leq \frac{1}{2}  \|\nabla_x u^{(n)}-\nabla_x u^{(n-1)}\|_{\mc{C}_b^{\beta}(\mbb{R}^d)}
		\leq \dots \leq \frac{1}{2^n} \|g\|_{\mc{C}_b^{\beta}(\mbb{R}^d)},
	\end{align*}
	and, therefore,
    \begin{gather*}
		\sum_{n \geq 1} \|u^{(n+1)}-u^{(n)}\|_{\mc{C}_b^{1,\max\{\beta,\, \gamma_0/2\}}(\mbb{R}^d)}  < \infty.
	\end{gather*}
    Since $C([0,T],\mc{C}_b^{1,\max\{\beta,\, \gamma_0/2\}}(\mbb{R}^d))$ is a Banach space, completeness implies that there is some $u \in C([0,T], \mc{C}_b^{1,\max\{\beta,\, \gamma_0/2\}}(\mbb{R}^d))$ such that $u^{(n)} \to u$ in $C([0,T],\mc{C}_b^{1,\max\{\beta,\, \gamma_0/2\}}(\mbb{R}^d))$. In particular,  by \eqref{pre-eq6},
    \begin{equation*}
		\|A u(t,\cdot) - A u^{(n)}(t,\cdot)\|_{\infty}
		\leq M \|u-u^{(n)}\|_{\mc{C}_b^{1,\gamma_0-1}(\mbb{R}^d)}
		\leq M \|u-u^{(n)}\|_{\mc{C}_b^{1,\gamma_0/2}(\mbb{R}^d)}
		\xrightarrow[]{n \to \infty} 0
	\end{equation*}
	(note that $\gamma_0-1 \leq \gamma_0/2$ as $\gamma_0 \in [1,2]$). Letting $n \to \infty$ in
    \begin{gather*}
		u^{(n)}(t,x) = \int_t^T \left(A_x u^{(n)}(s,x)+b(s,x) \cdot \nabla_x u^{(n)}(s,x) + g(s,x)\right) \, ds
	\intertext{we get}
		u(t,x) = \int_t^T \left(A_x u(s,x) + b(s,x) \cdot \nabla_x u(s,x) + g(s,x)\right) \, ds.
	\end{gather*}
	Using the above estimates, it is not difficult to see that $u$ has all the desired properties.
\end{proof}

We are now ready to prove Theorem~\ref{main1}.
\begin{proof}[Proof of Theorem~\ref{main1}]
    By considering each coordinate of $X_t  \in \mbb{R}^d$ separately, we may assume, without loss of generality, that $d=1$. Fix some sufficiently small $\eps>0$ (we will specify $\eps$ later in the proof), $p \leq \gamma_{\infty}$, $T>0$ and set $T_i := T \frac{i}{L}$, $i=0,\dots,L$.

    If we choose $L=L(\eps) \in \mbb{N}$ sufficiently large, Theorem~\ref{p-5} shows that there exists a function $u_i \in C([T_{i-1},T_i],\mc{C}_b^{1,\max\{\beta,\, \gamma_0/2\}}(\mbb{R})) \cap C^1([T_{i-1},T_i],C_b(\mbb{R}))$ such that
    \begin{align}\label{p-eq27}
    \begin{aligned}
        \frac{\partial}{\partial t} u_i(t,x) + A_x u_i(t,x) + b(t,x) \frac{\partial}{\partial x} u_i(t,x) &= - b(t,x)
        &&\text{on $[T_{i-1},T_i) \times \mbb{R}$} \\
		u_i(T_i,\cdot) &= 0
    \end{aligned}
	\intertext{and}\label{p-eq29}
		\|u_i\|_{\infty} + \|\partial_x u_i\|_{\mc{C}_b^{\beta}(\mbb{R}^d)} + \|\partial_x u_i\|_{\mc{C}_b^{\gamma_0/2}(\mbb{R}^d)} \leq \eps.
	\end{align}
	Denote by $(X^{(n)}_t)_{t \geq 0}$ the Euler--Maruyama approximation, i.e.\
    \begin{equation*}
		X_t^{(n)} = x + \int_0^t b(\eta_n(s),X_{\eta_n(s)-}^{(n)}) \, ds + L_t
	\end{equation*}
	where $\eta_n(s) := T \frac{i}{n}$ for $s \in [T \frac{i}{n}, T \frac{i+1}{n})$. We are going to show that
    \begin{equation*}
		\mbb{E} \left( \sup_{0 \leq t \leq T} |X_t^{(n)}-X_t^{(m)}|^p \right) \xrightarrow[]{m,n \to \infty} 0.
	\end{equation*}
	Applying It\^o's formula, cf.\ Proposition~\ref{app-3}, \marginpar{\footnotesize Please see the \newline correction on \\ the final page} it follows from \eqref{p-eq27} that
	\begin{align} \label{p-eq30} \begin{aligned}
		\int_{T_{i-1}}^t b(s,X_s^{(n)}) \, ds
		&= u_i(T_{i-1},X_{T_{i-1}}^{(n)}) - u_i(t,X_t^{(n)}) \\
		&\quad\mbox{} - \int_{T_{i-1}}^t \!\! \int_{|y| < 1} \big(u_i(s,X_{s-}^{(n)}+y)-u_i(s,X_{s-}^{(n)})\big) \, \tilde{N}(dy,ds) \\
		&\quad\mbox{} - \int_{T_{i-1}}^t \!\! \int_{|y| \geq 1} \big(u_i(s,X_{s-}^{(n)}+y)-u_i(s,X_{s-}^{(n)})\big) \,  N(dy,ds) \\
		&\quad\mbox{} - \int_{T_{i-1}}^t \big( b(\eta_n(s),X_{\eta_n(s)}^{(n)}) -b(s,X_s^{(n)}) \big) \partial_x u_i(s,X_s^{(n)}) \, ds
    \end{aligned}\end{align}
    for any $t \in [T_{i-1},T_i)$, $i=0,\dots,L$, where $\tilde{N}(dy,ds) = N(dy,ds)-\nu(dy) \, ds$ denotes the compensated jump measure of the L\'evy process $(L_t)_{t \geq 0}$. Fix $i \in \{0,\dots,L\}$, $t \in [T_{i-1},T_i]$ and $m,n \in \mbb{N}$. Observing that
    \begin{align*}
        |X_t^{(m)}-X_t^{(n)}|
        \leq |X_{T_{i-1}}^{(m)}-X_{T_{i-1}}^{(n)}| + \left| \int_{T_{i-1}}^t b(\eta_m(s),X_{\eta_m(s)}^{(m)}) \, ds
                - \int_{T_{i-1}}^t b(\eta_n(s),X_{\eta_n(s)}^{(n)}) \, ds \right|,
    \end{align*}
    we get \marginpar{\footnotesize Please see the \newline correction on \\ the final page}
   \begin{align*}
		|X_t^{(m)}-X_t^{(n)}|^p \leq C |X_{T_{i-1}}^{(m)}-X_{T_{i-1}}^{(n)}|^p + C (I_1+I_2+I_3+I_{4,1}+I_{4,2}+I_{5})
	\end{align*}
	for some constant $C=C(p)>0$ and the following (integral) expressions
    \begin{align*}
		I_1 &:= \left|u_i(T_{i-1},X_{T_{i-1}}^{(m)})-u_i(T_{i-1},X_{T_{i-1}}^{(n)})\right|^p + \left|u_i(t,X_t^{(m)})-u_i(t,X_t^{(n)})\right|^p \\
		I_2 &:= \left| \int_{T_{i-1}}^t \!\! \int_{|y|<1} H_i(s,y) \, \tilde{N}(dy,ds) \right|^p \\
		I_3 &:= \left| \int_{T_{i-1}}^t \!\! \int_{|y| \geq 1} H_i(s,y) \, N(dy,ds) \right|^p \\
		I_{4,1} &:= \left| \int_{T_{i-1}}^t \big( b(\eta_n(s),X_{\eta_n(s)}^{(n)}) -b(s,X_s^{(n)}) \big) \partial_x u_i(s,X_s^{(n)}) \, ds \right|^p \\
        I_{4,2} &:=  \left| \int_{T_{i-1}}^t \big( b(\eta_m(s),X_{\eta_m(s)}^{(m)}) -b(s,X_s^{(m)}) \big) \partial_x u_i(s,X_s^{(m)}) \, ds \right|^p \\
        I_{5} &:= \left| \int_{T_{i-1}}^t (b(\eta_n(s),X_{\eta_n(s)}^{(n)})-b(s,X_s^{(n)}) \, ds \right|^p + \left| \int_{T_{i-1}}^t (b(\eta_m(s),X_{\eta_m(s)}^{(m)})-b(s,X_s^{(m)}) \, ds \right|^p
	\end{align*}
	and \begin{equation*}
		H_i(s,y) := \big( u_i(s,X_{s-}^{(m)}+y) - u_i(s,X_{s-}^{(m)}) \big) - \big(u_i(s,X_{s-}^{(n)}+y) -u_i(s,X_{s-}^{(n)}) \big).
	\end{equation*}
    We estimate the terms separately. Because of \eqref{p-eq29}, we have $\|\partial_x u_i\|_{\infty} \leq \eps$, and therefore an application of the mean value theorem shows
    \begin{equation*}
		I_1 \leq \eps^p \left|X_{T_{i-1}}^{(m)}-X_{T_{i-1}}^{(n)}\right|^p + \eps^p \left|X_t^{(m)}-X_t^{(n)}\right|^p.
	\end{equation*}
    Moreover, it follows from \eqref{p-eq29} and the fact that $b(t,x)$ is $\beta$-H\"{o}lder-continuous with respect to $x$ and $\eta$-H\"{o}lder continuous with respect to $t$ that
    \begin{align*}
		I_{4,1}
        &\leq C \left| \int_{T_{i-1}}^t \big(b(\eta_n(s),X_{\eta_n(s)}^{(n)})-b(s,X_{\eta_n(s)}^{(n)})\big) \partial_x u_i(s,X_s^{(n)}) \, ds \right|^p \\
		&\quad + C \left| \int_{T_{i-1}}^t \big(b(s,X_{\eta_n(s)}^{(n)})-b(s,X_s^{(n)})\big) \partial_x u_i(s,X_s^{(n)}) \, ds \right|^p \\
		&\leq C_4 n^{-p \eta} \eps^p + C_4' \eps^p \sup_{T_{i-1} \leq t \leq T_i} |X_{\eta_n(t)}^{(n)}-X_t^{(n)}|^{\beta p}.
	\end{align*}
	The same estimate holds for $I_{4,2}$ with $n$ replaced by $m$. In exactly the same fashion we get
    \begin{equation*}
		I_5
        \leq C_5 n^{-\eta p} + C_5' \eps^p \sup_{T_{i-1} \leq t \leq T_i} |X_{\eta_n(t)}^{(n)}-X_{t}^{(n)}|^{\beta p}
            + C_5' \eps^p \sup_{T_{i-1} \leq t \leq T_i} |X_{\eta_m(t)}^{(m)}-X_{t}^{(m)}|^{\beta p}.
	\end{equation*}
	In order to estimate $I_2$ and $I_3$, we use Taylor's formula and \eqref{p-eq29}
    \begin{equation}\label{p-eq31}\begin{aligned}
		|H_i(s,y)|
        &\leq |X_{s-}^{(m)}-X_{s-}^{(n)}| \int_0^1 \left| \frac{\partial}{\partial x} u_i\big(s,X_s^{(m)}+y+\tau (X_{s-}^{(n)}-X_{s-}^{(m)})\big)\right.\\
        &\qquad\qquad\qquad\qquad\qquad\qquad\mbox{} - \left.\frac{\partial}{\partial x} u_i\big(s,X_{s-}^{(m)}+\tau(X_{s-}^{(n)}-X_{s-}^{(m)})\big) \right| \, d\tau \\
		&\leq \eps \min \big\{|X_{s-}^{(m)}-X_{s-}^{(n)}| \, |y|^{\gamma_0/2}, \: 2 |X_{s-}^{(m)}-X_{s-}^{(n)}|\big\}.
	\end{aligned}\end{equation}
	Applying the Burkholder--Davis--Gundy inequality, cf.\ Theorem~\ref{pre-1}, we find
    \begin{align*}
		&\mbb{E} \left( \sup_{T_{i-1} \leq t \leq T_i} |I_2| \right) \\
        &\leq C_2 \mbb{E} \left[ \left( \int_{T_{i-1}}^{T_i} \!\! \int_{|y|<1} |H_i(s,y)|^2 \, \nu(dy) \, ds \right)^{p/2} \right]
            + C_2 \mbb{E} \left( \int_{T_{i-1}}^{T_i} \!\! \int_{|y|<1} |H_i(s,y)|^p \, \nu(dy) \, ds \right) \I_{[2,\infty)}(p) \\
        &\leq C_2' \eps^p
            \left( \left[ \int_{|y|<1} |y|^{\gamma_0}\,\nu(dy) \right]^{p/2} + \int_{|y| <1} |y|^{\gamma_0}\,\nu(dy)\right)
            \mbb{E} \left( \sup_{T_{i-1} \leq t \leq T_i} |X_s^{(m)}-X_s^{(n)}|^p \right)
	\end{align*}
    for some absolute constants $C_2, C_2'>0$. In order to estimate $I_3$ we distinguish between two cases. If $p \in (0,1)$, then $(x+y)^p \leq x^p + y^p$ for all $x,y \geq 0$, and therefore by \eqref{p-eq31}
    \begin{align*}
		\mbb{E} \left( \sup_{T_{i-1} \leq t \leq T_i} |I_3| \right)
        &= \mbb{E} \Bigg( \sup_{T_{i-1} \leq t \leq T_i} \Bigg| \sum_{\substack{s \in [T_{i-1},t] \\ |\Delta L_s| \geq 1}} H_i(s,\Delta L_s) \Bigg|^p \Bigg)\\
        &\leq \mbb{E} \Bigg( \sum_{\substack{s \in [T_{i-1},T_i] \\ |\Delta L_s| \geq 1}} |H_i(s,\Delta L_s)|^p \Bigg) \\
		&= \mbb{E} \left( \int_{T_{i-1}}^{T_i} \!\! \int_{|y| \geq 1} |H_i(s,y)|^p \, N(dy,ds) \right) \\
		&= \mbb{E} \left( \int_{T_{i-1}}^{T_i}\!\! \int_{|y| \geq 1} |H_i(s,y)|^p \, \nu(dy) \, ds \right) \\
		&\leq 2^p \eps^p \nu(B(0,1)^c) \mbb{E} \left( \sup_{T_{i-1} \leq t \leq T_i} |X_t^{(m)}-X_t^{(n)}|^p \right).
	\end{align*}
	If $p \geq 1$, then $\int_{|y| \geq 1} |y| \, \nu(dy)<\infty$, and so
    \begin{equation*}
		I_3
        \leq C \left| \int_{T_{i-1}}^{t} \!\! \int_{|y| \geq 1} H_{i}(s,y) \, \tilde{N}(dy,ds) \right|^p + C \left| \int_{T_{i-1}}^t \!\! \int_{|y| \geq 1} H_i(s,y) \, \nu(dy) \, ds \right|^p.
	\end{equation*}
	By the Burkholder--Davis--Gundy inequality and \eqref{p-eq31}, there exist absolute constants $C_3,C_3'>0$ such that
    \begin{align*}
    	&\mbb{E} \left( \sup_{T_{i-1} \leq t \leq T_i} I_3 \right)\\
        &\quad\leq C_3 \mbb{E} \left( \left[ \int_{T_{i-1}}^{T_i} \!\! \int_{|y| \geq 1} |H_i(s,y)|^2 \, \nu(dy) \, ds \right]^{p/2} \right)
            + C_3 \mbb{E} \left( \int_{T_{i-1}}^{T_i} \!\! \int_{|y| \geq 1} |H_i(s,y)|^p \, \nu(dy) \, ds \right) \\
    	&\quad\leq C_3' \eps^p \big(\nu(B(0,1)^c)^{p/2} + \nu(B(0,1)^c)\big) \mbb{E} \left( \sup_{T_{i-1} \leq t \leq T_i} |X_t^{(m)}-X_t^{(n)}|^p \right).
	\end{align*}	
	Combining the above estimates we conclude that there exist constants $c_1, c_2>0$ (not depending on $\eps$, $m$, $n$, $L$, $i$) such that
    \begin{align*}
        &\mbb{E} \left( \sup_{T_{i-1} \leq t \leq T_i}| X_t^{(m)}-X_t^{(n)}|^p \right)\\
        &\quad\leq \eps^p c_1 \mbb{E} \left( \sup_{T_{i-1} \leq t \leq T_i}| X_t^{(m)}-X_t^{(n)}|^p \right)
            + c_2 \mbb{E} \left( |X_{T_{i-1}}^{(m)}-X_{T_{i-1}}^{(n)}|^p \right)
            + \frac{c_2}{n^{p \eta}} + \frac{c_2}{m^{p \eta}} \\
		&\qquad + c_2 \mbb{E} \left( \sup_{T_{i-1} \leq t \leq T_i} |X_{\eta_n(t)}^{(n)}-X_t^{(n)}|^{p \beta} \right)
            + c_2 \mbb{E} \left( \sup_{T_{i-1} \leq t \leq T_i} |X_{\eta_m(t)}^{(m)}-X_t^{(m)}|^{p \beta} \right).
	\end{align*}
	Thus,
    \begin{align*}
		&(1-\eps^p c_1) \mbb{E} \left( \sup_{T_{i-1} \leq t \leq T_i} | X_t^{(m)}-X_t^{(n)}|^p \right) \\
		&\quad\leq c_2 \mbb{E} \left( |X_{T_{i-1}}^{(m)}-X_{T_{i-1}}^{(n)}|^p \right)
            + \frac{2c_2}{N^{p \eta}}
            + 2c_2 \sup_{n \geq N} \mbb{E} \left( \sup_{T_{i-1} \leq t \leq T_i} |X_{\eta_n(t)}^{(n)}-X_t^{(n)}|^{p \beta} \right)
	\end{align*}
    for any $m,n \geq N$. Choose $\eps>0$ so small that $1- \eps^p c_1 \geq 1/2$. By the very definition of the Euler--Maruyama approximation, we have \begin{equation*}
		X_{\eta_n(t)}^{(n)}-X_t^{(n)} = \int_{\eta_n(t)}^t b(\eta_n(s),X_{\eta_n(s)-}^{(n)}) \, ds + L_{\eta_n(t)}-L_t.
	\end{equation*}
    Using $L_{t}-L_{\eta_n(t)} \stackrel{d}{=} L_{t-\eta_n(t)}$ and fractional moment estimates for L\'evy processes, see \cite[Section 5]{moments}, we can find a constant $c_3>0$ such that
    \begin{align*}
        \mbb{E} \left( \sup_{T_{i-1} \leq t \leq T_i} |X_{\eta_n(t)}^{(n)}-X_t^{(n)}|^{p \beta} \right)
        &\leq (2\|b\|_{\infty})^{p\beta} n^{-p \beta} + 2^{p \beta} \mbb{E} \left( \sup_{T_{i-1} \leq t \leq T_i} |L_{t-\eta_n(t)}|^{p \beta} \right) \\
        &\leq (2\|b\|_{\infty})^{p\beta} n^{-p \beta} + 2^{p \beta} \mbb{E} \left( \sup_{s \leq 1/n} |L_s|^{p \beta} \right)\\
        &\leq c_3 n^{-\min\{1,\,p \beta/\gamma_0\}}.
	\end{align*}
	Hence,
    \begin{align*}
		\mbb{E} \left( \sup_{T_{i-1} \leq t \leq T_i} | X_t^{(m)}-X_t^{(n)}|^p \right)
		\leq 2c_2 \mbb{E} \left( |X_{T_{i-1}}^{(m)}-X_{T_{i-1}}^{(n)}|^p \right)  + 8c_2 c_3 N^{-\min\{1,p \beta/\gamma_0,p \eta\}}.
	\end{align*}
	Using this estimate iteratively for $i=1,\dots,L$, we conclude that there exists a constant $c_4=c_4(L)>0$ such that
    \begin{equation*}
		\mbb{E} \left( \sup_{T_{i-1} \leq t \leq T_i} |X_t^{(m)}-X_t^{(n)}|^p \right)
        \leq c_4 N^{-\min\{1,p \beta/\gamma_0, p \eta\}}
	\end{equation*}
	for all $m,n \geq N$. Thus,
    \begin{align}\label{p-eq33}
        \mbb{E} \left( \sup_{0 \leq t \leq T} |X_t^{(m)}-X_t^{(n)}|^p \right)
        \leq \sum_{i=1}^L \mbb{E} \left( \sup_{T_{i-1} \leq t \leq T_i} |X_t^{(m)}-X_t^{(n)}|^p \right)
        \leq c_4 L N^{-\min\{1,p \beta/\gamma_0, p \eta\}}
	\end{align}
	for all $m,n \geq N$; this means, in particular, that
    \begin{equation*}
		\mbb{E} \left( \sup_{0 \leq t \leq T} |X_t^{(m)}-X_t^{(n)}|^p \right) \xrightarrow[]{m,n \to \infty} 0.
	\end{equation*}
    This implies that there exists a stochastic process $(X_t)_{t \in [0,T]}$ and a subsequence $(n_k)_{k \in \mbb{N}}$ such that $\sup_{t \in [0,T]} |X_t^{(n_k)}-X_t| \to 0$ almost surely as $k \to \infty$, cf.\ Lemma~\ref{app-5}.  Letting $k \to \infty$ in
    \begin{gather*}
		X_t^{(n_k)} -x = \int_0^t b(\eta_{n_k}(s),X_{\eta_{n_k}(s)}^{(n_k)}) \, ds + L_t
	\intertext{we find}
		X_t -x = \int_0^t b(s,X_{s-}) \,ds + L_t.
	\end{gather*}
	Moreover, it follows from Fatou's lemma and \eqref{p-eq33} that
    \begin{equation*}
		\mbb{E} \left( \sup_{0 \leq t \leq T} |X_t-X_t^{(n)}|^p \right)
        \leq c_4' n^{-\min\{1,\,p \beta/\gamma_0,\, p\eta\}}.
	\end{equation*}
    This proves the existence of a solution to \eqref{sde} satisfying \eqref{ineq}.

    In order to show uniqueness, we assume that $(Y_t)_{t \geq 0}$ is a further solution to \eqref{sde}. Applying It\^o's formula to $u_i(t,Y_t)$ with $u_i$ as in \eqref{p-eq27}, we get a similar expression as in \eqref{p-eq30} for $\int_0^t b(s,Y_s) \, ds$, and a very similar reasoning as in the first part of the proof to shows that
    \begin{equation}
		\mbb{E} \left( \sup_{0 \leq t \leq T} |Y_t-X_t^{(n)}|^p \right) \leq c_5 {n^{-\min\{1,\,p \beta/\gamma_0, \,p \eta\}}} \label{p-eq37}
	\end{equation}
	for all $n \in \mbb{N}$. Thus, by Fatou's lemma,
    \begin{gather*}
		\mbb{E} \left( \sup_{0 \leq t \leq T} |Y_t-X_t|^p \right) = 0.
    \qedhere
	\end{gather*}
\end{proof}

\begin{proof}[Proof of Corollary~\ref{main3}]
	From \cite[Theorem 1.3]{ssw} it follows that the semigroup $P_t \phi(x) := \mbb{E}\phi(x+L_t)$ satisfies
    \begin{equation*}
		\|\nabla P_t \phi\|_{\infty} \leq c t^{-1/\alpha} \|\phi\|_{\infty} \fa \phi \in \mc{B}_b(\mbb{R}^d).
	\end{equation*}
    By Lemma~\ref{p-0}, this implies $\int_{\mbb{R}^d} |\partial_i p_t(x)| \, dx \leq ct^{-1/\alpha}$ for all $i \in \{1,\dots,d\}$. Applying Theorem~\ref{main1} finishes the proof.
\end{proof}

The remaining part of this section is devoted to the proof of Corollary~\ref{main2}. From now on $(S_t)_{t \geq 0}$ denotes a subordinator with Laplace exponent (Bernstein function) $f$, $(B_t^{(d)})_{t \geq 0}$ is a $d$-dimensional Brownian motion and $L_t^{(d)} := B_{S_t}^{(d)}$ is the process subordinate to Brownian motion. Note that $(L_t^{(d)})_{t \geq 0}$ is a $d$-dimensional L\'evy process with characteristic exponent $\psi(\xi) = f(|\xi|^2)$, $\xi \in \mbb{R}^d$. If $f$ satisfies the Hartman--Wintner condition
\begin{equation}\label{hw}
	\lim_{r \to \infty} \frac{f(r)}{\log(1+r)} = \infty,
\end{equation}
then $L_t^{(d)}$  has for all $t>0$ a transition density $p_t^{(d)}$ see e.g.\ \cite{knop}, and $p_t^{(d)}$ is isotropic, i.e.\ $p_t^{(d)}(x)$ depends only on $|x|$; in abuse of notation we write $p_t^{(d)}(x) = p_t^{(d)}(|x|)$. Using polar coordinates one finds, cf.\ Matheron \cite[pp.\ 33--4]{matheron} and \cite{schoenberg},
\begin{equation}\label{diff}
	\frac{d}{dr} p_t^{(d)}(r)
    = -2 \pi r p_t^{(d+2)}(r), \quad r>0,\; d \geq 1.
\end{equation}

\begin{lemma} \label{p-7}
    Let $f$ be a Bernstein function satisfying the Hartman--Wintner condition \eqref{hw}, and let $(L_t^{(d)})_{t \geq 0}$ be a $d$-dimensional L\'evy process with characteristic exponent $\psi(\xi) = f(|\xi|^2)$, $\xi \in \mbb{R}^d$, for $d \geq 1$. If there exist constants $c>0$, $\alpha>0$ such that \begin{equation} \label{p-eq51}
		\mbb{E} \left( \big|L_t^{(d+2)}\big|^{-1} \right) \leq ct^{-1/\alpha} \fa t \in (0,T],
	\end{equation}
	then the transition density of $L_t^{(d)}$, $t>0$, satisfies \eqref{ihke}.
\end{lemma}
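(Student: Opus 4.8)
The plan is to differentiate the (isotropic) transition density $p_t^{(d)}$ by means of the radial identity \eqref{diff}, which turns a spatial derivative of $p_t^{(d)}$ into the density $p_t^{(d+2)}$ of the process one dimension pair higher, and then to recognise the resulting integral over $\mbb{R}^d$ as the negative moment $\mbb{E}(|L_t^{(d+2)}|^{-1})$, to which the hypothesis \eqref{p-eq51} applies directly.

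First I would settle the regularity and symmetry needed to justify the computation. Since $f$ is a Bernstein function, it is real-valued, so the characteristic exponent of $L_t^{(d)}$ is $\psi(\xi)=f(|\xi|^2)$ with $\re\psi(\xi)=f(|\xi|^2)$, and the Hartman--Wintner condition \eqref{hw} for $f$ forces $\re\psi(\xi)/\log(1+|\xi|)\to\infty$ as $|\xi|\to\infty$; the same reasoning applies in dimension $d+2$. Hence, for every $t>0$, both $L_t^{(d)}$ and $L_t^{(d+2)}$ possess transition densities $p_t^{(d)}$, $p_t^{(d+2)}$, which are isotropic (the processes are) and smooth, since their Fourier transforms $e^{-t\psi}$ decay faster than any polynomial; in particular $p_t^{(d)}\in C^2(\mbb{R}^d)$ and \eqref{diff} is available.

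Next comes the core computation. Writing $p_t^{(d)}(x)=p_t^{(d)}(|x|)$ and combining the chain rule with \eqref{diff}, for $x\neq 0$ and $i\in\{1,\dots,d\}$ one has
\begin{equation*}
    \partial_{x_i}p_t^{(d)}(x)=\frac{x_i}{|x|}\,\frac{d}{dr}p_t^{(d)}(|x|)=-2\pi x_i\,p_t^{(d+2)}(|x|),
\end{equation*}
so $|\partial_{x_i}p_t^{(d)}(x)|\leq 2\pi|x|\,p_t^{(d+2)}(|x|)$. Let $\omega_{k-1}$ denote the surface area of the unit sphere in $\mbb{R}^k$. Polar coordinates in $\mbb{R}^d$ give
\begin{equation*}
    \int_{\mbb{R}^d}|\partial_{x_i}p_t^{(d)}(x)|\,dx\leq 2\pi\,\omega_{d-1}\int_0^\infty r^d\,p_t^{(d+2)}(r)\,dr,
\end{equation*}
whereas polar coordinates in $\mbb{R}^{d+2}$, in which $p_t^{(d+2)}$ is the density of $L_t^{(d+2)}$, give
\begin{equation*}
    \mbb{E}(|L_t^{(d+2)}|^{-1})=\int_{\mbb{R}^{d+2}}|y|^{-1}p_t^{(d+2)}(|y|)\,dy=\omega_{d+1}\int_0^\infty r^d\,p_t^{(d+2)}(r)\,dr.
\end{equation*}
Since $\omega_{k-1}=2\pi^{k/2}/\Gamma(k/2)$ and $\Gamma(d/2+1)=(d/2)\Gamma(d/2)$, one computes $2\pi\,\omega_{d-1}/\omega_{d+1}=d$, and the two displays combine to $\int_{\mbb{R}^d}|\partial_{x_i}p_t^{(d)}(x)|\,dx\leq d\,\mbb{E}(|L_t^{(d+2)}|^{-1})$. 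Inserting \eqref{p-eq51} yields $\int_{\mbb{R}^d}|\partial_{x_i}p_t^{(d)}(x)|\,dx\leq dc\,t^{-1/\alpha}$ for all $t\in(0,T]$ and $i\in\{1,\dots,d\}$, which is \eqref{ihke}.

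The only point requiring real care is the preparatory step, namely knowing that the radial profile $p_t^{(d)}$ is differentiable with derivative as in \eqref{diff} and that the isotropic density may be differentiated coordinatewise — this is exactly where the Hartman--Wintner smoothing is used — together with bookkeeping the surface-area constants so that the factor $d$ is clean; beyond this there is no analytic obstacle, the rest being two applications of the polar-coordinate formula.
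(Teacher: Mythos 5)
Your proof is correct and follows essentially the same route as the paper's: differentiate the isotropic density via the radial identity \eqref{diff}, pass to polar coordinates in $\mbb{R}^d$ and $\mbb{R}^{d+2}$ to recognise the resulting integral as a multiple of $\mbb{E}(|L_t^{(d+2)}|^{-1})$, and apply \eqref{p-eq51}. The only difference is cosmetic (you track the surface-area constants explicitly to obtain the clean factor $d$), so there is nothing to add.
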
	
\begin{proof}
Denote by $p_t^{(d)}(x) = p_t^{(d)}(|x|)$, $x \in \mbb{R}^d$, the transition density of $(L_t^{(d)})_{t \geq 0}$. Using polar coordinates and \eqref{diff}, we find for each $i=1,\dots, d$,
\begin{align*}
    \int_{\mbb{R}^d} |\partial_{x_i} p_t^{(d)}(x)| \, dx
	\leq 2\pi \int_{\mbb{R}^d} |x| p_t^{(d+2)}(|x|) \, dx
	&= 2\pi \sigma_d \int_{(0,\infty)} r p_t^{(d+2)}(r) r^{d-1} \, dr \\
	&= 2 \pi \sigma_d \int_{(0,\infty)} \frac{1}{r} p_t^{(d+2)}(r) r^{(d+2)-1} \, dr \\
	&= \frac{\sigma_d}{\sigma_{d+2}} \int_{\mbb{R}^{d+2}} \frac{1}{|x|} p_t^{(d+2)}(|x|) \, dx \\
	&= \frac{\sigma_d}{\sigma_{d+2}} \mbb{E} \left( \big|L_t^{(d+2)}\big|^{-1} \right)
\end{align*}
where $\sigma_d$ is the surface volume of the unit sphere $\mbb{S}^{d-1}\subseteq\mbb{R}^d$. Hence, by \eqref{p-eq51},
\begin{gather*}
    \int_{\mbb{R}^d} |\partial_{x_i} p_t(x)| \, dx \leq c' t^{-1/\alpha} \fa t \in (0,T],\; i=1,\dots,d.
\qedhere
\end{gather*}
\end{proof}

\begin{remark}\label{p-9}
	More generally, the condition
    \begin{gather*}
	   \mbb{E} \left(\big|L_t^{(d+2i)}\big|^{-i}\right) \leq ct^{-i/\alpha},
        \quad i=1,\dots,k,\; t \in (0,T]
	\intertext{guarantees that}
	   \int_{\mbb{R}^d} |\partial^{\gamma}_x p_t(x)| \, dx
        \leq c' t^{-|\gamma|/\alpha}
        \fa \gamma \in \mbb{N}_0^d,\; |\gamma| \leq k,\; t \in (0,T].
	\end{gather*}
\end{remark}

\begin{proof}[Proof of Corollary~\ref{main2}]
	By assumption, there exists some $c>0$ such that $\psi(\xi) \geq c |\xi|^{2\rho}$ for large $|\xi|$, and therefore
    \begin{equation*}
		p_t(x) = (2\pi)^{-d} \int_{\mbb{R}^d} e^{-ix \cdot \xi} e^{-t \psi(\xi)} \, d\xi, \quad x \in \mbb{R}^d
	\end{equation*}
    is the density of $L_t$; by the differentiation lemma, $p_t$ is twice continuously differentiable. To prove that the density of $L_t = L_t^{(d)} = B_{S_t}^{(d)}$, $t>0$, satisfies \eqref{ihke} for $\alpha :=2 \rho$, it suffices by Lemma~\ref{p-7} to show that \eqref{p-eq51} holds for $\alpha = 2 \rho$. To this end, we recall that for any $\kappa>0$ there exists a constant $C>0$ such that
    \begin{equation}\label{p-eq53}
		\mbb{E}(S_t^{-\kappa}) \leq C \min\{t,\,1\}^{-\kappa/\rho}, \quad t \geq 0,
	\end{equation}
	cf.\ \cite[Theorem 3.17]{deng}. As $(B_t^{(d)})_{t \geq 0}$ and $(S_t)_{t \geq 0}$ are independent, we get by the scaling property of Brownian motion
    \begin{align*}
		\mbb{E}\left( \big|L_t^{(d+2)}\big|^{-1} \right)
		= \mbb{E}\left( \big|B_{S_t}^{(d+2)}\big|^{-1} \right)
		= \mbb{E}\left(\big|\sqrt{S_t} B_1^{(d+2)}\big|^{-1} \right)
		= \mbb{E}\left(S_t^{-1/2}\right) \mbb{E}\left(|B_1^{(d+2)}|^{-1}\right).
	\end{align*}
	Note that
    \begin{equation*}
		\mbb{E}\left(\big|B_1^{(d+2)}\big|^{-1}\right)
        = \int_{\mbb{R}^{d+2}} \frac{1}{|z|} \frac{1}{(2\pi)^{(d+2)/2}} \exp \left(- \frac{|z|^2}{2} \right) \, dz
        < \infty
	\end{equation*}
	as $1< d+2$. Because of \eqref{p-eq53} we get \eqref{p-eq51}, hence \eqref{ihke}, for $\alpha = 2 \rho$.

	Finally, since
    \begin{gather*}
		\int_{(0,1)} r^{\delta_0} \, \mu(dr) + \int_{(1,\infty)} r^{\delta_{\infty}} \, \mu(dr)< \infty
	\intertext{implies}
		\int_{B(0,1)} |y|^{2\delta_0} \, \nu(dy) + \int_{B(0,1)^c} |y|^{2\delta_{\infty}} \, \nu(dy)<\infty,
	\end{gather*}
    the assumptions of Theorem~\ref{main1} are satisfied for $\alpha := 2\rho$, $\gamma_0 := 2 \delta_0$, $\gamma_{\infty} := 2 \delta_{\infty}$, and this completes the proof.
\end{proof}

\section{Examples} \label{ex}

The following lemma is useful if one wants to verify the assumptions of Theorem~\ref{main1} and Corollary~\ref{main2}. It shows how the growth of the characteristic exponent at $0$ (resp., at infinity) is related to the existence of moments of the L\'evy measure at infinity (resp., at $0$).

\begin{lemma} \label{ex-1}
    Let $\psi: \mbb{R}^d \to \mbb{C}$ be a continuous negative definite function with L\'evy triplet $(\ell,0,\nu)$, and let $f$ be a Bernstein function with characteristics $(0,\mu)$.
    \begin{enumerate}
	\item\label{ex-1-i}
        If $\mu(dy) \geq c |y|^{-1-\rho} \, dy$ on $B(0,1)$ for some $c>0$ and $\rho \in (0,1)$, then
        \begin{equation*}
			\liminf_{\lambda \to \infty} \frac{f(\lambda)}{\lambda^{\rho}}>0.
		\end{equation*}

	\item\label{ex-1-ii}
    \begin{enumerate}
		\item\label{ex-1-ii-a}
            $f(r) \leq c r^{\delta}$ for all $r \geq 1$ implies $\int_{(0,1)} r^{\delta+\eps} \, \mu(dr)<\infty$ for any $\eps>0$.
		\item\label{ex-1-ii-b}
            $f(r) \leq c r^{\delta}$ for all $r \in [0,1]$ implies $\int_{(1,\infty)} r^{\delta-\eps} \, \mu(dr)<\infty$ for any $\eps>0$.
	\end{enumerate}
	
    \item\label{ex-1-iii}
    \begin{enumerate}
		\item\label{ex-1-iii-a}
            $|\re \psi(\xi)| \leq c |\xi|^{\alpha}$ for all $|\xi| \geq 1$ implies $\int_{B(0,1)} |y|^{\alpha+\eps} \, \nu(dy)<\infty$ for any $\eps>0$.
		\item\label{ex-1-iii-b}
            $|\re \psi(\xi)| \leq c |\xi|^{\alpha}$ for all $|\xi| \leq 1 $ implies $\int_{B(0,1)^c} |y|^{\alpha-\eps} \, \nu(dy)<\infty$ for any $\eps>0$.
	\end{enumerate}
    \end{enumerate}
\end{lemma}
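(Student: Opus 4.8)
The plan is to dispatch (i) by a one-line direct estimate and to prove (ii) and (iii) by a single scheme: express the power appearing in the conclusion as an integral of the elementary kernel $1-e^{-\lambda y}$ (for subordinators) resp.\ $1-\cos(y\cdot\xi)$ (for $\psi$) against a homogeneous weight, interchange the integrals by Tonelli, bound the resulting inner integral by $f$ resp.\ $\re\psi$, and split the outer integral into a part near $0$ and a part near $\infty$, on which we use, respectively, the crude (always available) bound and the hypothesised growth bound — or the other way round. For \emph{part (i)}, from $f(\lambda)\ge\int_{(0,1)}(1-e^{-\lambda y})\,\mu(dy)\ge c\int_0^1(1-e^{-\lambda y})y^{-1-\rho}\,dy$ the substitution $y=u/\lambda$ gives $f(\lambda)\ge c\lambda^{\rho}\int_0^{\lambda}(1-e^{-u})u^{-1-\rho}\,du$, and since $\rho\in(0,1)$ the last integral increases to the finite, strictly positive number $\int_0^{\infty}(1-e^{-u})u^{-1-\rho}\,du$; hence $\liminf_{\lambda\to\infty}f(\lambda)\lambda^{-\rho}>0$.

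For \emph{parts (ii) and (iii)} I would invoke the two classical representations
\[
 y^{\theta}=\frac{\theta}{\Gamma(1-\theta)}\int_0^{\infty}(1-e^{-\lambda y})\lambda^{-1-\theta}\,d\lambda\quad(\theta\in(0,1),\ y>0),
 \qquad
 |y|^{\theta}=c_{d,\theta}\int_{\mbb{R}^d}(1-\cos(y\cdot\xi))\,|\xi|^{-d-\theta}\,d\xi\quad(\theta\in(0,2),\ y\in\mbb{R}^d),
\]
with $c_{d,\theta}>0$ (the second being just the Lévy–Khintchine formula for the isotropic $\theta$-stable exponent). In each of the four statements the conclusion is trivial unless the exponent occurring in it falls into the range where the corresponding representation applies: e.g.\ if $\delta+\eps\ge1$ then $\int_{(0,1)}r^{\delta+\eps}\,\mu(dr)\le\int_{(0,1)}r\,\mu(dr)<\infty$, and if $\alpha+\eps\ge2$ then either $\int_{B(0,1)}|y|^{\alpha+\eps}\,\nu(dy)\le\int_{B(0,1)}|y|^2\,\nu(dy)<\infty$ or $\alpha>2$ forces $\nu\equiv0$; similarly for the other cases. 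So I may assume $\delta\pm\eps\in(0,1)$ in (ii) and $\alpha\pm\eps\in(0,2)$ in (iii).

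For (ii)(a), plugging $\theta=\delta+\eps$ into the first identity and applying Tonelli gives $\int_{(0,1)}r^{\delta+\eps}\,\mu(dr)=c_\theta\int_0^{\infty}\lambda^{-1-(\delta+\eps)}\big(\int_{(0,1)}(1-e^{-\lambda r})\,\mu(dr)\big)\,d\lambda$; on $\{\lambda\ge1\}$ the inner integral is $\le f(\lambda)\le c\lambda^{\delta}$, giving $c\int_1^{\infty}\lambda^{-1-\eps}\,d\lambda<\infty$, and on $\{\lambda\le1\}$ it is $\le\lambda\int_{(0,1)}r\,\mu(dr)$ (use $1-e^{-\lambda r}\le\lambda r$), giving a constant times $\int_0^1\lambda^{-(\delta+\eps)}\,d\lambda<\infty$. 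Part (ii)(b) is the mirror image: with $\theta=\delta-\eps$ bound the inner integral by $f(\lambda)\le c\lambda^{\delta}$ on $(0,1)$ and by $\mu((1,\infty))$ on $(1,\infty)$, obtaining $c\int_0^1\lambda^{\eps-1}\,d\lambda$ and $\mu((1,\infty))\int_1^{\infty}\lambda^{-1-(\delta-\eps)}\,d\lambda$, both finite. For (iii) recall that, since $Q=0$, $\re\psi(\xi)=\int_{y\ne0}(1-\cos(y\cdot\xi))\,\nu(dy)$. For (iii)(a), with $\theta=\alpha+\eps$, Tonelli yields $\int_{B(0,1)}|y|^{\alpha+\eps}\,\nu(dy)=c_{d,\theta}\int_{\mbb{R}^d}|\xi|^{-d-(\alpha+\eps)}\big(\int_{B(0,1)}(1-\cos(y\cdot\xi))\,\nu(dy)\big)\,d\xi$; the inner integral is $\le\re\psi(\xi)\le c|\xi|^{\alpha}$ on $\{|\xi|\ge1\}$, giving $c\int_{|\xi|\ge1}|\xi|^{-d-\eps}\,d\xi<\infty$, and $\le\tfrac12|\xi|^2\int_{B(0,1)}|y|^2\,\nu(dy)$ on $\{|\xi|\le1\}$ (use $1-\cos t\le\tfrac12t^2$), giving a constant times $\int_{|\xi|\le1}|\xi|^{2-d-(\alpha+\eps)}\,d\xi<\infty$. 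Part (iii)(b) is again the mirror image: with $\theta=\alpha-\eps$ bound the inner integral over $B(0,1)^c$ by $\re\psi(\xi)\le c|\xi|^{\alpha}$ on $\{|\xi|\le1\}$ and by $2\nu(B(0,1)^c)$ on $\{|\xi|\ge1\}$, and split.

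I do not expect a genuine obstacle here; the whole of (ii)–(iii) is one mechanism applied four times. The only point that needs attention is bookkeeping — choosing the right crude bound on the right part of the outer integral so that \emph{both} halves converge, which is exactly where the strict inequalities $\eps>0$ and (in the ``$+\eps$'' cases) $\delta+\eps<1$ resp.\ $\alpha+\eps<2$ are used — and carefully recording the handful of trivial boundary cases. The Tonelli interchanges are all legitimate since every integrand in sight is nonnegative.
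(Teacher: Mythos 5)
Your proof is correct and follows essentially the same route as the paper: for (i) the same lower bound plus a change of variables, and for (ii)--(iii) the same integral representations of $r^\theta$ and $|\xi|^\theta$ combined with Tonelli and a split of the outer parameter integral into a neighbourhood of $0$ and a neighbourhood of $\infty$. The only organisational difference is that the paper first absorbs the harmless half of the outer integral into a constant (showing, e.g., $r^{\delta+\eps}\le C\int_{(1,\infty)}(1-e^{-rs})\,s^{-1-\delta-\eps}\,ds$ for $r<1$) and then applies Tonelli once, whereas you keep the full representation, apply Tonelli, and split afterwards using the crude bounds $1-e^{-\lambda r}\le\lambda r$ resp.\ $1-\cos t\le\tfrac12 t^2$; these are equivalent pieces of bookkeeping and neither is preferable.
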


\begin{proof}
\firstpara{\ref{ex-1-i}}
Fix $\lambda>1$. As $1-e^{-\lambda r} \geq 0$ for $r \geq 0$, we have  \begin{align*}
		f(\lambda) = \int_{(0,\infty)} (1-e^{-\lambda r}) \, \mu(dr)
		&\geq \int_{(0,\lambda^{-1})} (1-e^{-\lambda r}) \, \mu(dr) \\
		&\geq c \int_{(0,\lambda^{-1})} (1-e^{-\lambda r}) \,\frac{dr}{r^{1+\rho}}.
	\end{align*}
	Changing variables according to $s := \lambda r$, we find that the right-hand side equals $c' \lambda^{\rho}$ for some strictly positive constant $c'$.

\para{\ref{ex-1-ii-a}}
    If $\delta+\eps \geq 1$ there is nothing to show since $\int_{(0,1)} r \, \mu(dr)<\infty$. For $\delta+\eps \in (0,1)$ we use the formula
    \begin{equation}\label{ex-eq7}
	   r^{\delta+\eps} = \frac{\delta+\eps}{\Gamma(1-\delta-\eps)} \int_{(0,\infty)} (1-e^{-rs}) \,\frac{ds}{s^{1+\delta+\eps}}
	\end{equation}
	It is not difficult to see that this implies
    \begin{equation*}
		r^{\delta+\eps} \leq C \int_{(1,\infty)} (1-e^{-rs}) \,\frac{ds}{s^{1+\delta+\eps}}  \fa r \in (0,1)
	\end{equation*}
	for some constant $C>(\delta+\eps)/\Gamma(1-\delta-\eps)$. Applying Tonelli's theorem we find
    \begin{gather*}
		\int_{(0,1)} r^{\delta+\eps} \, \mu(dr)
		\leq C\int_{(1,\infty)} \!\! \int_{(0,1)} (1-e^{-rs}) \, \mu(dr) \, \frac{ds}{s^{1+\delta+\eps}}
		\leq C \int_{(1,\infty)} \frac{f(s)}{s^{1+\delta+\eps}} \, ds < \infty.
	\end{gather*}

\para{\ref{ex-1-ii-b}}
    Since $f$ grows at most linearly and $\int_{|y| \geq 1} \, \mu(dy)<\infty$, we can assume without loss of generality that $\delta \in (0,1]$ and $\delta-\eps>0$. It follows from \eqref{ex-eq7} (with $\eps$ replaced by $-\eps$)  that there exists a constant $c'>0$ such that
    \begin{equation*}
		r^{\delta-\eps} \leq c' \int_{(0,1)} (1-e^{-rs})\, \frac{ds}{s^{1+\delta-\eps}} \fa r \geq 1.
	\end{equation*}
	Applying Tonelli's theorem once again shows
    \begin{align*}
		\int_{(1,\infty)} r^{\delta-\eps} \, \mu(dr)
		\leq c' \int_{(0,1)} \frac{f(s)}{s^{1+\delta-\eps}} \, ds < \infty.
	\end{align*}
	
\para{\ref{ex-1-iii}}
		The reasoning is very similar to the proof of (ii); use that for $\alpha\in (0,2)$
        \begin{equation*}
			|\xi|^{\alpha}
            =
            \frac{\alpha 2^{\alpha-1}\Gamma\left(\frac{\alpha+d}{2}\right)}{\pi^{d/2}\,\Gamma\left(1-\frac\alpha2\right)} \int_{\mbb{R}^d \setminus \{0\}} (1-\cos(\xi y)) \frac{dy}{|y|^{d+\alpha}}, \quad \xi \in \mbb{R}^d;
		\end{equation*}
		see also \cite[Lemma A.1]{ihke}.
\end{proof}

Combining Lemma~\ref{ex-1} with Corollary~\ref{main2} we get the following statement.

\begin{example} \label{ex-3}
    Let $(L_t)_{t \geq 0}$ be a $d$-dimensional L\'evy process with one of the following characteristic exponents $\psi:\mbb{R}^d\to\mbb{R}$:
    \begin{enumerate}\itemsep5pt
        \item\label{ex-3-i}
        $\psi(\xi) = |\xi|^{\alpha}$ for $\alpha \in (1,2]$; \hfill (isotropic stable)
		
        \item\label{ex-3-ii}
        $\psi(\xi) = (|\xi|^2+m^2)^{\alpha/2}-m^{\alpha}$ for $\alpha \in (1,2)$, $m>0$; \hfill (relativistic stable)
		
        \item\label{ex-3-iii}
        $\psi(\xi) = - (|\xi|^2+m^2)^{\alpha/2} \cos \left( \alpha \arctan \frac{|\xi|}{m} \right)+ m^{\alpha}$ for $\alpha \in (1,2)$, $m>0$; \hfill (tempered stable)
		
        \item\label{ex-3-iv}
        $\psi(\xi) = (|\xi|^2+m)_{\alpha}-(m)_{\alpha}$ for some $\alpha \in (1,2)$, $m>0$; \hfill (Lamperti stable)
        \linebreak
        here $(t)_{\alpha} := \Gamma(t+\alpha)/\Gamma(t)$ denotes the Pochhammer symbol.
	\end{enumerate}

    If $b: (0,\infty) \times \mbb{R}^d \to \mbb{R}^d$ is a bounded function which is $\beta$-H\"{o}lder continuous with respect to $x$ and $\eta$-H\"{o}lder continuous with respect to $t$ for some $\eta \in (0,1]$ and $\beta>\frac{2}{\alpha}-1$, then the SDE
    \begin{equation*}
		dX_t = b(t,X_{t-}) \, dt + dL_t, \quad X_0 = x\in\mbb{R}^d,
	\end{equation*}
	has a pathwise unique strong solution. For any $p <\gamma_{\infty}$ and $T>0$ there exists a constant $C>0$ such that
    \begin{equation*}
		\mbb{E} \left( \sup_{t \leq T} |X_t-X_t^{(n)}|^p \right) \leq C {n^{-\min\{1,\,p \beta/\alpha,\,p \eta\}}} \fa n \geq 1
	\end{equation*}
    where we set $\gamma_{\infty} := \alpha$ for the exponent \ref{ex-3-i} and $\gamma_{\infty}:=\infty$ for all other exponents \ref{ex-3-ii}--\ref{ex-3-iv}.
\end{example}

\begin{remark} \label{ex-4}
\firstpara{(i)}
    The L\'evy measure of a tempered stable L\'evy process is given by
    \begin{equation*}
    	\nu(dy) =\frac{1}{2} \frac{\alpha (\alpha-1)}{\Gamma(2-\alpha)} e^{-m |y|} |y|^{-d-\alpha} \, dy
        \quad\text{for $\alpha\in (1,2]$}
    \end{equation*}
    cf.~\cite{kuechler} or \cite[Example 5.7]{matters}. Note that different authors use different names for this process, e.g.\ KoBoL process, CGMY process and truncated L\'evy process.

\para{(ii)}
    Example~\ref{ex-3} can be also shown by combining Theorem~\ref{main1} with the heat kernel estimates established in \cite{matters}, see also \cite{parametrix}; in fact, any continuous negative definite function listed in \cite[Table 2]{matters} satisfies the assumptions of Theorem~\ref{main1}.
\end{remark}

We close this section with a further example; it covers many interesting and important L\'evy processes.
\begin{example} \label{ex-5}
    Let $(L_t)_{t \geq 0}$ be a $d$-dimensional L\'evy process with characteristic exponent $\psi$ and L\'evy triplet $(0,0,\nu)$. Assume that $\nu$ is of the form \begin{equation}\label{ex-eq11}
		\nu(A) = \int_{\mbb{S}^{d-1}}\!\!\int_{(0,\infty)} \I_A(r \vartheta) Q(r) \, dr \,  \mu(d\vartheta),
        \quad A \in \mc{B}(\mbb{R}^d \setminus \{0\}) 	
    \end{equation}
	for a finite measure $\mu$ on the unit sphere $\mbb{S}^{d-1}$ in $\mbb{R}^d$ such that the support of $\mu$ is not contained in $\mbb{S}^{d-1} \cap V$ where $V \subseteq \mbb{R}^d$ is a lower-dimensional subspace, and a function $Q: (0,\infty) \to (0,\infty)$ satisfying
    \begin{equation*}
		0 < \liminf_{r \to 0} \frac{Q(r)}{r^{1+\gamma_0}} \leq \limsup_{r \to 0} \frac{Q(r)}{r^{1+\gamma_0}} < \infty
    \quad\text{and}\quad
		 \limsup_{r \to \infty} \frac{Q(r)}{r^{1+\gamma_{\infty}}} < \infty
	\end{equation*}
    for some $\gamma_0 \in (1,2]$ and $\gamma_{\infty}>0$. 	If $b: (0,\infty) \times \mbb{R}^d \to \mbb{R}^d$ is a bounded function which is $\beta$-H\"{o}lder continuous with respect to $x$ and $\eta$-H\"{o}lder continuous with respect to $t$ for some $\eta \in (0,1]$ and
    \begin{equation*}
		\gamma_0 (1+\beta)>2,
	\end{equation*}
	then the SDE
    \begin{equation*}
	   dX_t = b(t,X_{t-}) \, dt + dL_t, \quad X_0 = x\in\mbb{R}^d,
	\end{equation*}
	has a pathwise unique strong solution. For any $p <\gamma_{\infty}$ and $T>0$ there exists a constant $C>0$ such that
    \begin{equation*}
	   \mbb{E} \left( \sup_{t \leq T} |X_t-X_t^{(n)}|^p \right) \leq C {n^{-\min\{1,\,p \beta/\gamma_0,\,p \eta\}}} \fa n \geq 1.
	\end{equation*}
\end{example}

Since each of the processes in Example~\ref{ex-3} has a L\'evy measure of the form \eqref{ex-eq11}, Example~\ref{ex-5} is more general than Example~\ref{ex-3}. Let us point out that Example~\ref{ex-5} includes truncated stable L\'evy processes, i.e.\ $Q(r) = r^{-1-\alpha} \I_{(0,1)}(r)$, and layered stable L\'evy processes, i.e.\ $Q(r) = r^{-1-\alpha} \I_{(0,1)}(r) + r^{-1-\beta} \I_{[1,\infty)}(r)$.

\begin{proof}[Proof of Example~\ref{ex-5}]
    Some elementary calculations show that $c^{-1}|\xi|^{\gamma_0} \leq \re \psi(\xi) \leq c |\xi|^{\gamma_{0}}$ for some constant $c\in (0,\infty)$ as $|\xi| \to \infty$. Moreover, by the very definition of $\nu$, $\int_{|y| \leq 1} |y|^{\gamma_0+\eps} \, \nu(dy)+ \int_{|y| \geq 1} |y|^{\gamma_{\infty}-\eps} \, \nu(dy)<\infty$ for any $\eps>0$. Applying Corollary~\ref{main3} with $f(r) := r^{\gamma_0}$ finishes the proof.
\end{proof}

\appendix
\section{}

For the proof of our main results we use the following auxiliary statements.
\begin{proposition}[differentiation lemma for parameter-dependent integrals]\label{app-1}
    Let $(X,\mc{A},\mu)$ be a $\sigma$-finite measure space and $\phi: (a,b) \times X \to \mbb{R}$ a measurable function with the following properties. \begin{enumerate}
		\item\label{app-1-i} $\int_X |\phi(s,x)| \, \mu(dx)<\infty$ for all $s \in (a,b)$.
		\item\label{app-1-ii} $s \mapsto \phi(s,x)$ is differentiable for all $x \in X$ and %, $\int_X |\partial_s \phi(s,x)| \, \mu(dx)<\infty$
        \begin{equation*}
		      \int_{(a,b)} \int_X |\partial_s \phi(s,x)| \, \mu(dx) \, ds < \infty.
		\end{equation*}
		\item\label{app-1-iii} $s \mapsto \int_X \partial_s \phi(s,x) \, \mu(dx)$ is continuous.
	\end{enumerate}
	Then $F(s) := \int_X \phi(s,x) \, \mu(dx)$ is continuously differentiable for all $s \in (a,b)$ and
    \begin{equation*}
		F'(s) = \int_X \partial_s \phi(s,x) \, \mu(dx), \quad s \in (a,b).
	\end{equation*}
\end{proposition}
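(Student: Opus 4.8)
The plan is to deduce the statement from the one-dimensional fundamental theorem of calculus applied in the parameter variable, combined with Tonelli's and Fubini's theorems; assumption (iii) then upgrades differentiability of $F$ to \emph{continuous} differentiability.

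First I would observe that $(s,x)\mapsto\partial_s\phi(s,x)$ is jointly measurable, since by (ii) it is the everywhere-defined pointwise limit of the measurable difference quotients $n\bigl(\phi(s+\tfrac1n,x)-\phi(s,x)\bigr)$. By Tonelli's theorem (using $\sigma$-finiteness of $\mu$) the integrability hypothesis in (ii) rewrites as $\int_X\bigl(\int_{(a,b)}|\partial_r\phi(r,x)|\,dr\bigr)\,\mu(dx)<\infty$, so there is a $\mu$-null set $N\in\mc{A}$ such that $r\mapsto\partial_r\phi(r,x)\in L^1((a,b))$ for every $x\in X\setminus N$. For such $x$ the function $r\mapsto\phi(r,x)$ is differentiable at every point of $(a,b)$ and has a Lebesgue-integrable derivative; hence it is locally absolutely continuous and
\begin{equation*}
  \phi(s,x)-\phi(s_0,x)=\int_{s_0}^{s}\partial_r\phi(r,x)\,dr \fa s_0,s\in(a,b),\ x\notin N.
\end{equation*}

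Next, fix $s_0\in(a,b)$ and let $s\in(a,b)$ be arbitrary. Using (i) to split the integral, inserting the identity above (valid off the $\mu$-null set $N$), and then applying Fubini's theorem — legitimate because $(r,x)\mapsto\partial_r\phi(r,x)$ is $dr\otimes\mu$-integrable on $(s_0\wedge s,\,s_0\vee s)\times X$ by (ii) — I would obtain
\begin{equation*}
  F(s)-F(s_0)=\int_X\bigl(\phi(s,x)-\phi(s_0,x)\bigr)\,\mu(dx)
  =\int_{s_0}^{s}\Bigl(\int_X\partial_r\phi(r,x)\,\mu(dx)\Bigr)\,dr=\int_{s_0}^{s}g(r)\,dr,
\end{equation*}
where $g(r):=\int_X\partial_r\phi(r,x)\,\mu(dx)$. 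Since $g$ is continuous on $(a,b)$ by (iii), the function $s\mapsto F(s)=F(s_0)+\int_{s_0}^{s}g(r)\,dr$ is continuously differentiable with $F'=g$, which is precisely the assertion.

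The step I expect to require the only non-routine input is the slicewise fundamental theorem of calculus: the fact that a real function differentiable at \emph{every} point of an interval with Lebesgue-integrable derivative is absolutely continuous, hence equals the indefinite integral of its derivative. It is exactly the everywhere-differentiability supplied by (ii) — not merely differentiability almost everywhere — that makes this valid. The remaining ingredients (joint measurability of $\partial_s\phi$, Tonelli, Fubini, and the elementary fact that the indefinite integral of a continuous function is $C^1$) are entirely standard.
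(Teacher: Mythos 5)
Your proof is correct and takes essentially the same route as the paper's — slicewise fundamental theorem of calculus in the parameter, Fubini to swap the order of integration, then continuity of $g(r)=\int_X\partial_r\phi(r,x)\,\mu(dx)$ from hypothesis (iii). You are in fact more careful than the paper, which just writes ``applying the fundamental theorem of calculus and Fubini's theorem'': you make explicit that the slicewise FTC needs the non-routine fact that a function differentiable at \emph{every} point of an interval with Lebesgue-integrable derivative is absolutely continuous, and that Tonelli together with (ii) gives the required $L^1$-bound on $\partial_r\phi(\cdot,x)$ for $\mu$-a.e.\ $x$ — details the paper passes over silently.
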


Note that \ref{app-1-iii} is always satisfied if $t \mapsto \partial_t \phi(t,x)$ is continuous and there exists a function $w \in L^1(\mu)$ such that $|\partial_t \phi(t,x)| \leq w(x)$ for all $t \in (a,b)$ and $x \in X$; therefore, Proposition~\ref{app-1} extends the standard version of the differentiation lemma which can be found, for instance, in \cite[Theorem 12.5]{mims2}.

\begin{proof}[Proof of Proposition~\ref{app-1}]
	Fix $s \in (a,b)$. Applying the fundamental theorem of calculus and Fubini's theorem, we find
    \begin{align*}
		F(s+h)-F(s)
		= \int_X (\phi(s+h,x)-\phi(s,x)) \, \mu(dx)
		&= \int_X \int_{s}^{s+h} \partial_r \phi(r,x) \, dr \, \mu(dx) \\
		&= \int_s^{s+h} \int_X \partial_r \phi(r,x) \, \mu(dx) \, dr
	\end{align*}
	for all $h \in \mbb{R}$. By assumption, $f(r) := \int_X \partial_s \phi(r,x) \, \mu(dx)$ is continuous, and so
    \begin{equation*}
        \lim_{h \to 0} \frac{1}{h} (F(s+h)-F(s))
        = \lim_{h \to 0} \frac{1}{h} \int_s^{s+h} f(r) \, dr
        = f(s)
        \stackrel{\text{def}}{=} \int_X \partial_s \phi(s,x) \, \mu(dx).
    \qedhere
	\end{equation*}
\end{proof}

\begin{proposition} \label{app-3}
    Let $(L_t)_{t \geq 0}$ be a $k$-dimensional L\'evy process with L\'evy triplet $(\ell,0,\nu)$ and jump measure $N$ such that $\int_{|y| \geq 1} |y|^{\gamma}\, \nu(dy)<\infty$ holds for some $\gamma \in [1,2]$. Denote by $\mathcal F := (\mathcal F_t)_{t\geq 0}$, $\mathcal{F_t}:=\sigma(L_s; s\leq t)$, the natural filtration and let $b: [0,\infty) \times \Omega \to \mbb{R}^d$ and $\sigma: [0,\infty) \times \Omega \to \mbb{R}^{d \times k}$ be $\mathcal{F}$-progressively measurable bounded functions. Then the process
    \begin{equation*}
		X_t  := x + \int_0^t b(s) \, ds + \int_0^t \sigma(s) \, dL_s, \quad  x\in\mbb{R}^d
	\end{equation*}
	satisfies It\^o's formula  \marginpar{\footnotesize Please see the \newline correction on \\ the final page}
    \begin{align}\label{app-eq5}\begin{aligned}
		& F(t,X_t) -F(0,X_0) \\
		&= \int_{(0,t)} \partial_s F(s,X_s) \, ds +\int_{(0,t)} \nabla_x F(s,X_s) \cdot (b(s)+\sigma(s) \cdot \ell) \,ds \\
		&\quad\mbox{} + \iint_{(0,t)\times B(0,1)} (F(s,X_{s-}+\sigma(s) \cdot y)-F(s,X_{s-})) \, \tilde{N}(dy,ds) \\
		&\quad\mbox{} + \iint_{(0,t)\times B(0,1)^c} (F(s,X_{s-}+\sigma(s) \cdot y)-F(s,X_{s-})) \, N(dy,ds) \\
		&\quad\mbox{} + \iint_{(0,t)\times \mbb{R}^d} (F(s,X_{s-}+\sigma(s) \cdot y)-F(s,X_{s-})- \nabla_x F(s,X_{s-}) \cdot \sigma(s) y \I_{(0,1)}(|y|)) \, \nu(dy) \, ds \end{aligned}
	\end{align}
	for any function $F \in C_b^{1,1}((0,\infty) \times \mbb{R}^d)$ such that for every $T>0$
    \begin{equation*}
        \|\nabla_x F\|_{\mc{C}^{\gamma-1}([0,T])}
        := \sup_{t \in [0,T]}  \sup_{x \in \mbb{R}^d} |\nabla_x F(t,x)|
           +  \sup_{t \in [0,T]} \sup_{\substack{x,y \in \mbb{R}^d \\ x \neq y}} \frac{|\nabla_x F(t,x)-\nabla_y F(t,y)|}{|x-y|^{\gamma-1}} < \infty.
	\end{equation*}
\end{proposition}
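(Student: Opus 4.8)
The plan is to deduce \eqref{app-eq5} from the classical It\^o formula for semimartingales with jumps via a mollification argument. Using the L\'evy--It\^o decomposition $L_t = \ell t + \iint_{(0,t)\times B(0,1)} y\,\tilde N(dy,ds) + \iint_{(0,t)\times B(0,1)^c} y\,N(dy,ds)$ (no Gaussian part, since $Q=0$), and since $b,\sigma$ are bounded and progressively measurable, $(X_t)_{t\geq 0}$ is a semimartingale; for a function $F$ that is $C^1$ in $t$ and $C^2$ in $x$ with bounded derivatives the classical It\^o formula (Ikeda--Watanabe \cite{ikeda}, Protter \cite{protter}) gives precisely \eqref{app-eq5} -- there is no second-order term $\tfrac12 D_x^2 F$ because $Q=0$, the ``second order'' being absorbed entirely into the $\nu(dy)\,ds$-compensator. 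The point is that for such a driving process one may relax $C^2$ in $x$ to $C^1$ with a merely $(\gamma-1)$-H\"older continuous gradient, because the small jumps of $L$ are $\gamma$-integrable near the origin and hence the compensator ``sees'' only $\gamma$ derivatives of $F$.

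I would fix a mollifier $\rho_\eps(z)=\eps^{-d}\rho(z/\eps)$ and set $F_\eps(t,x):=\int_{\mbb{R}^d}F(t,x-z)\rho_\eps(z)\,dz$; then $F_\eps$ is $C^1$ in $t$ and $C^\infty$ in $x$ with bounded derivatives, so the classical formula applies to $F_\eps$ and yields \eqref{app-eq5} with $F$ replaced by $F_\eps$. The crucial observation is that mollification does not increase the relevant norms: $\|F_\eps\|_\infty\leq\|F\|_\infty$, $\|\nabla_x F_\eps\|_\infty\leq\|\nabla_x F\|_\infty$ and $\|\nabla_x F_\eps\|_{\mc{C}^{\gamma-1}([0,T])}\leq\|\nabla_x F\|_{\mc{C}^{\gamma-1}([0,T])}$ for all $\eps$, while $F_\eps\to F$ uniformly and $\partial_t F_\eps\to\partial_t F$, $\nabla_x F_\eps\to\nabla_x F$ locally uniformly as $\eps\to 0$; note that $\|D_x^2 F_\eps\|_\infty$ is finite (of order $\eps^{\gamma-2}$) but this bound is useless in the limit. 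It then remains to pass to the limit $\eps\to 0$ in each of the five terms on the right of \eqref{app-eq5}, and in the left, where $F_\eps(t,X_t)-F_\eps(0,X_0)\to F(t,X_t)-F(0,X_0)$ almost surely.

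For the two drift integrals one applies dominated convergence pathwise, using that the paths of $X$ on $[0,t]$ have a.s.\ bounded range, so that $\partial_s F_\eps(s,X_s)\to\partial_s F(s,X_s)$ and $\nabla_x F_\eps(s,X_s)\to\nabla_x F(s,X_s)$, with the integrable dominators $\|\partial_t F\|_\infty$ and $\|\nabla_x F\|_\infty(\|b\|_\infty+\|\sigma\|_\infty|\ell|)$. The compensated small-jump integral has integrand $H_\eps(s,y):=F_\eps(s,X_{s-}+\sigma(s)y)-F_\eps(s,X_{s-})$, for which the mean value theorem gives $|H_\eps(s,y)|\leq\|\nabla_x F\|_\infty\|\sigma\|_\infty|y|$ uniformly in $\eps$; since $\int_{|y|<1}|y|^2\,\nu(dy)<\infty$ and $H_\eps\to H$ pointwise, the It\^o isometry together with dominated convergence yields $L^2$-convergence of this term. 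The large-jump integral $\iint_{(0,t)\times B(0,1)^c}[\cdots]\,N(dy,ds)$ is, for almost every $\omega$, a finite sum over the finitely many jumps of $L$ of size $\geq 1$ in $[0,t]$, hence converges pathwise by uniform convergence $F_\eps\to F$. Finally, for the $\nu(dy)\,ds$-compensator, split at $|y|=1$: on $B(0,1)^c$ the integrand is bounded by $2\|F\|_\infty$ and $\nu(B(0,1)^c)<\infty$, while on $B(0,1)$ a first-order Taylor expansion with integral remainder and the uniform H\"older bound give
\begin{equation*}
\bigl|F_\eps(s,X_{s-}+\sigma(s)y)-F_\eps(s,X_{s-})-\nabla_x F_\eps(s,X_{s-})\cdot\sigma(s)y\bigr|\leq \|\nabla_x F\|_{\mc{C}^{\gamma-1}([0,T])}\,\|\sigma\|_\infty^{\gamma}\,|y|^{\gamma},
\end{equation*}
which is $\nu$-integrable near $0$ because $\int_{|y|\leq 1}|y|^\gamma\,\nu(dy)<\infty$ in the present setting -- this is also the condition ensuring $\mc{C}_b^{1,\gamma-1}(\mbb{R}^d)\subseteq\mc{D}(A)$ (cf.\ the remark after \eqref{pre-eq5}), so that the integral in \eqref{app-eq5} is well-defined -- and dominated convergence applies. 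Combining, the right-hand side converges in probability and the left-hand side almost surely, so \eqref{app-eq5} holds almost surely for each fixed $t$, and, by right-continuity of both sides, simultaneously for all $t$.

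The main obstacle is precisely this last, uniform-in-$\eps$ bound on the compensator integrand: for the smooth approximants $F_\eps$ the compensator converges only thanks to the (exploding) bound on $D_x^2 F_\eps$ together with $\int_{|y|<1}|y|^2\,\nu<\infty$, so to survive the limit one must replace it by the $\eps$-independent estimate displayed above, which hinges on mollification preserving the H\"older seminorm of $\nabla_x F$. This is the step where the reduced regularity of $F$ is exactly balanced against the $\gamma$-integrability of the L\'evy measure near the origin.
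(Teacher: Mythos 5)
Your proof takes essentially the same route as the paper: mollify $F$ in the space variable, apply the classical It\^o formula to the smooth approximants, and pass to the limit using that mollification preserves $\|\nabla_x F\|_{\mc{C}^{\gamma-1}([0,T])}$, so that the small-jump compensator integrand is dominated uniformly in $\eps$ by $\|\nabla_x F\|_{\mc{C}^{\gamma-1}([0,T])}\|\sigma\|_\infty^{\gamma}|y|^{\gamma}$, which is $\nu$-integrable near $0$. The only differences are low-level technicalities: the paper localizes with the stopping time $\tau_R=\inf\{t\geq 0;\,|X_t|\geq R\}$ and extracts an a.s.\ convergent subsequence for the compensated martingale term, whereas you exploit the global boundedness of $F$, $\nabla_x F$ directly and are content with convergence in probability -- both work.
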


\begin{proof}[Sketch of the proof]
    Fix $T>0$ and pick $\chi \in C_c^{\infty}(\mbb{R}^d)$ such that $\chi \geq 0$, $\int \chi(y) \, dy =1$ and $\chi(y)=0$ for all $|y| \geq 1$. If we set \begin{equation*}
		F_k(t,x) :=  k^d \int_{\mbb{R}^d} F(t,x+y) \chi(ky) \, dy, \quad x \in \mbb{R}^d,\; t \geq 0,\; k\in\mbb{N},
	\end{equation*}
	then $F_k \in C_b^{1,2}((0,\infty) \times \mbb{R}^d)$,
    \begin{equation*}
		\sup_{t \in [0,T]} \sup_{x \in K} \big(|(F-F_k)(t,x)| + |\nabla_x (F-F_k)(t,x)| + |\partial_t (F-F_k)(t,x)| \big)
        \xrightarrow[]{k \to \infty} 0
	\end{equation*}
	for any compact set $K \subseteq \mbb{R}^d$; moreover, we have
    \begin{equation*}
		\sup_{k \in\mbb{N}} \| \nabla_x F_k\|_{\mc{C}^{\gamma-1}([0,T])}
        \leq \|\nabla_x F\|_{\mc{C}^{\gamma-1}([0,T])}
        < \infty.
	\end{equation*}
	Applying Taylor's formula we find that there exist a sequence $c_k \to 0$ and a constant $C>0$ such that
    \begin{align} \label{app-eq7} \begin{gathered}
		|(F-F_k)(s,x+z)-(F-F_k)(s,x)| \leq c_k \min\{1,|z|\} \\
		|(F-F_k)(s,x+z)-(F-F_k)(s,x)-\nabla_x (F-F_k)(s,x) \cdot z| \leq C \min\{1, |z|^{\gamma}\}
    \end{gathered}\end{align}
    for all $x \in K$, $z \in \mbb{R}^d$ and $s \in [0,T]$.  Since we can apply It\^o's formula for each $F_k \in C_b^{1,2}((0,\infty) \times \mbb{R}^d)$, see e.g.\ \cite[Chapter II.5]{ikeda}, we get \eqref{app-eq5} for $F=F_k$. Define
    \begin{equation*}
		\tau_R := \inf\{t \geq 0; |X_t| \geq R\}.
	\end{equation*}
    Using \eqref{app-eq5} for $F_k$ and replacing $t$ by $t \wedge \tau_R$, it is not difficult to see that each of the integrals converges as $k\to\infty$: for the third integral (which is an $L^2$-martingale), we use It\^o's isometry, \eqref{app-eq7} and the dominated convergence theorem to get $L^2$-convergence and then we extract an almost surely convergent subsequence; all other integral expressions converge almost surely because of \eqref{app-eq7} and dominated convergence. This gives \eqref{app-eq5} for $F$ and with $t$ replaced by $t \wedge \tau_R$. Almost the same argument allows us now to let $R \to \infty$, and the claim follows.
\end{proof}

\begin{lemma} \label{app-5}
	Let $(X_t^{(n)})_{t \in [0,T]}$ be a sequence of stochastic processes with c\`adl\`ag sample paths such that
    \begin{equation*}
		\mbb{E} \left( \sup_{0 \leq t \leq T} \big|X_t^{(n)}-X_t^{(m)}\big|^p \right) \xrightarrow[]{m,n \to \infty} 0
	\end{equation*}
    for some $p>0$. Then there exists a stochastic process $(X_t)_{t \in [0,T]}$ and a subsequence $(n_k)_{k \in \mbb{N}}$ such that $\sup_{t \in [0,T]} |X_t-X_t^{(n_k)}| \to 0$ almost surely as $k \to \infty$.
\end{lemma}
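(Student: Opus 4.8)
The plan is to run the standard completeness argument for $L^p$-Cauchy sequences of processes: pass to a fast subsequence, upgrade the $L^p$-smallness to a summable probability bound via Markov's inequality, apply Borel--Cantelli to obtain almost sure uniform convergence of the subsequence, and finally check that the resulting limit has c\`adl\`ag paths.

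First I would use the Cauchy hypothesis to choose indices $n_1 < n_2 < \dots$ with
\[
  \mbb{E}\Big( \sup_{0 \le t \le T} \big| X_t^{(n_{k+1})} - X_t^{(n_k)} \big|^p \Big) \le 2^{-k(1+p)}
  \fa k \in \mbb{N}.
\]
By Markov's inequality applied directly with the exponent $p$ — this is the one spot where a little care is needed so that the estimate is uniform in $p>0$, including $p<1$ —
\[
  \mbb{P}\Big( \sup_{0 \le t \le T} \big| X_t^{(n_{k+1})} - X_t^{(n_k)} \big| > 2^{-k} \Big)
  \le 2^{kp}\, \mbb{E}\Big( \sup_{0 \le t \le T} \big| X_t^{(n_{k+1})} - X_t^{(n_k)} \big|^p \Big)
  \le 2^{-k},
\]
and since $\sum_k 2^{-k}<\infty$, Borel--Cantelli yields a set $\Omega_0$ of full measure such that for each $\omega \in \Omega_0$ one has $\sup_{0 \le t \le T} |X_t^{(n_{k+1})}(\omega)-X_t^{(n_k)}(\omega)| \le 2^{-k}$ for all large $k$, hence $\sum_k \sup_{0 \le t \le T} |X_t^{(n_{k+1})}(\omega)-X_t^{(n_k)}(\omega)|<\infty$.

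Next I would note that on $\Omega_0$ the functions $t \mapsto X_t^{(n_k)}(\omega)$ form a uniformly Cauchy sequence on $[0,T]$, since for $j>k$
\[
  \sup_{0 \le t \le T} \big| X_t^{(n_j)}(\omega) - X_t^{(n_k)}(\omega) \big|
  \le \sum_{i=k}^{\infty} \sup_{0 \le t \le T} \big| X_t^{(n_{i+1})}(\omega) - X_t^{(n_i)}(\omega) \big|
  \xrightarrow[]{k \to \infty} 0;
\]
note that the triangle inequality is applied to the sup-norm $\sup_t|\cdot|$ before raising to any power, so no subadditivity of $r \mapsto r^p$ is used. Consequently $X_t(\omega) := \lim_k X_t^{(n_k)}(\omega)$ exists with the convergence uniform in $t$ (and we set $X_t := 0$ off $\Omega_0$); letting $j \to \infty$ in the last display gives $\sup_{0 \le t \le T}|X_t(\omega)-X_t^{(n_k)}(\omega)| \to 0$ for every $\omega \in \Omega_0$, i.e.\ almost surely. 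It then only remains to observe that a uniform limit of c\`adl\`ag functions is c\`adl\`ag, so $(X_t)_{t\in[0,T]}$ has c\`adl\`ag sample paths, and that each $X_t$ is measurable as an a.s.\ pointwise limit of the $X_t^{(n_k)}$. There is no genuine obstacle here: the argument is entirely routine, and the only subtleties are the two small points flagged above.
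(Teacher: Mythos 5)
Your proof is correct, and it takes a genuinely different (and in fact more unified) route than the paper. The paper splits into two cases: for $p\geq 1$ it simply cites the Riesz--Fischer completeness theorem for $L^p$, and for $p\in(0,1)$ it chooses $n_k$ with $\mbb{E}(\sup_t|X^{(n_k)}_t-X^{(n_{k-1})}_t|^p)\leq 2^{-k}$, sums, interchanges sum and expectation, and concludes $\sum_k\sup_t|X^{(n_k)}_t-X^{(n_{k-1})}_t|^p<\infty$ a.s., whence (since the summands tend to $0$, so are eventually $<1$, so their $p$-th powers dominate them) the series of sup-norms converges and the processes are uniformly Cauchy pathwise. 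Your argument instead applies Markov's inequality with exponent $p$ to the fast subsequence and then Borel--Cantelli; the choice $2^{-k(1+p)}$ is exactly what makes the Markov estimate summable, and this works uniformly for all $p>0$ with no case distinction. This is essentially the proof hidden inside Riesz--Fischer made explicit, so what you gain is a self-contained, single argument; what you give up is nothing substantive. Both proofs handle the càdlàg point identically (uniform limits of càdlàg paths are càdlàg), and both first take the triangle inequality on the sup-norm before any power enters, so no spurious subadditivity of $r\mapsto r^p$ is needed — you were right to flag that as the only point requiring care.
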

\begin{proof}
    For $p \geq 1$ this follows from the Riesz--Fischer theorem on the completeness of the spaces $L^p$, see e.g.\ \cite[Theorem 13.7]{mims2}; therefore it suffices to consider the case $p \in (0,1)$. For $k \geq 1$ choose iteratively $n_k > n_{k-1}$ such that
    \begin{equation*}
		\mbb{E} \left( \sup_{0 \leq t \leq T} \big|X_t^{(n_k)} -X_t^{(n_{k-1})}\big|^p \right) \leq \frac{1}{2^k}.
	\end{equation*}
	As $p \in (0,1)$, we have $(x+y) \leq x^p + y^p$ for $x,y \geq 0$, and this implies
    \begin{equation*}
		\mbb{E} \left( \sum_{k \geq 1} \sup_{0 \leq t \leq T} \big|X_t^{(n_k)}-X_t^{(n_{k-1})}\big|^p \right)
		\leq \sum_{k \geq 1} \mbb{E} \left( \sup_{0 \leq t \leq T} \big|X_t^{(n_k)}-X_t^{(n_{k-1})}\big|^p \right)
		\leq \sum_{k \geq 1} \frac{1}{2^k} < \infty.
	\end{equation*}
    Since $X_t^{(n_i)} = X_t^{(n_0)} + \sum_{k=1}^i (X_t^{(n_k)}-X_t^{(n_{k-1})})$ this shows that the limit $X_t := \lim_{i \to \infty} X_t^{(n_i)}$ exists uniformly in $t \in [0,T]$ with probability $1$.
\end{proof}

\newpage
\noindent
\textbf{Correction to the present paper:} 

\bigskip\bigskip\noindent
In the statement of It\^o's formula, formula (38) of Proposition A.2, the last integral appearing on the right-side ranges only over $(0,t)\times B(0,1)$ rather than $(0,t)\times \mathbb{R}^d$, i.e.
\begin{align}\tag{38}
    \dots + \iint_{(0,t)\times\mathbb{R}^d} \dots\,\nu(dy)\,ds
    \xrightarrow[]{\text{should read}}
    \dots + \iint_{(0,t)\times B(0,1)} \dots\,\nu(dy)\,ds
\end{align}

We are using (38) on the proof of Theorem 2.1, p.~12, formula (28). The above mentioned change gives one further term on the right-hand side of (28)
\begin{align}
\tag{28}
    \dots\mbox{}-\int_{T_{i-1}}^t \int_{|y| \geq 1} (u_i(s,X_{s-}^{(n)}+y)-u_i(s,X_{s-}^{(n)})) \, \nu(dy) \, ds
\end{align} 
leading to an additional term $I_6$ on p.~13 (line 6 from above), i.e. 
\begin{align*}
    |X_t^{(m)}-X_t^{(n)}|^p \leq \ldots + C(I_1+I_2+I_3+I_{4,1}+I_{4,2}+I_5+ \fbox{$I_6$})
\end{align*} 
which is of the form
\begin{align*}
    I_6 := \left|\int_{T_{i-1}}^t \int_{|y| \geq 1} H_i(s,y) \, \nu(dy) \, ds \right|^p
\end{align*}
with $H_i$ defined on p.~13 (line 16 from below). Since $\nu(B(0,1)^c)<\infty$, we can estimate this term using (29): 
\begin{align*}
    \mathbb{E} \left( \sup_{T_{i-1} \leq t \leq T_i} |I_6| \right) 
    \leq 2^p \epsilon^p T^p \mbb{E} \left( \sup_{T_{i-1} \leq s \leq T_i} |X_s^{(m)}-X_s^{(n)}|^p \right) \left( \int_{|y| \geq 1} \, \nu(dy)\right)^p,
\end{align*}
as is needed for the rest of the proof. 

\bigskip\noindent
\emph{We are grateful to Changsong Deng \textup{(}Wuhan University\textup{)} for pointing out the error in formula \textup{(38)}.}

\end{document}